\title{The Gauss image problem for pseudo-cones}
\author{Rolf Schneider}
\date{}
\date{}
\newcommand{\Sn}{{\mathbb S}^{n-1}}
\newcommand{\R}{{\mathbb R}}
\newcommand{\C}{{\mathcal C}}
\newcommand{\K}{{\mathcal K}}
\newcommand{\N}{{\mathbb N}}
  \newcommand{\D}{{\rm d}}
  \newcommand{\fed}{\,\rule{.1mm}{.20cm}\rule{.20cm}{.1mm}\,}
\newtheorem{theorem}{Theorem}
\newtheorem{lemma}{Lemma}
\newtheorem{definition}{Definition}
\begin{document}
\maketitle

\begin{abstract}
{The Gauss image problem for convex bodies asks for the existence of a convex body that ``links'' two given measures on the unit sphere in a certain way. We treat here a corresponding question for pseudo-cones, that is, for unbounded closed convex sets strictly contained in their recession cones.}\\[2mm]
{\em Keywords: Gauss image problem, pseudo-cone}   \\[1mm]
2020 Mathematics Subject Classification: 52A20 49Q20
\end{abstract}

\section{Introduction}\label{sec1}

The Gauss image problem, as suggested by B\"or\"oczky, Lutwak, Yang, Zhang, Zhao \cite{BLYZZ20}, can be described as follows. In Euclidean space $\R^n$ ($n\ge 2$) with unit sphere $\Sn$, let $K$ be a compact convex set containing the origin $o$ in the interior. Define the radial map $r_K:\Sn\to\partial K$ such that $r_K(u)=ru\in\partial K$ with $r>0$. For $\eta\subset\Sn$, the radial Gauss image ${\boldsymbol\alpha}_K(\eta)$ is defined as the set of all outer unit normal vectors of $K$ at points of $r_K(\eta)$. If $\eta$ is a Borel set, then ${\boldsymbol\alpha}_K(\eta)$ is known (\cite{Sch14}, p. 88) to be (spherically) Lebesgue measurable. Let $\lambda$ be a finite measure on the Lebesgue measurable subsets of $\Sn$ which is absolutely continuous with respect to spherical Lebesgue measure. The Gauss image measure of $\lambda$, associated with $K$, is defined by
$$ \lambda(K,\eta):= \lambda({\boldsymbol\alpha}_K(\eta)), \quad \eta\subset\Sn \mbox{ Borel}.$$
It is known (\cite[Sect. 3]{BLYZZ20}) that $\lambda(K,\cdot)$ is a measure. The Gauss image problem now asks: Given a finite Borel measure $\mu$ on $\Sn$, what are the necessary and sufficient conditions on $\lambda$ and $\mu$ in order that there is a convex body $K$ (with $o\in{\rm int}\,K$) such that $\lambda(K,\cdot) =\mu$ on the Borel subsets of $\Sn$? 

When $\lambda$ is spherical Lebesgue measure, then $\lambda(K,\cdot)$ is known as Aleksandrov's integral curvature; in this case, the problem was already solved by Aleksandrov \cite{Ale39}. The general Gauss image problem was proposed in \cite{BLYZZ20}. In that article, it was proved that for the existence of $K$ with $\lambda(K,\cdot) =\mu$ (where $\lambda$ and $\mu$ are non-zero) it is sufficient that the measures $\lambda$ and $\mu$ are Aleksandrov related (for the explanation of which we refer to \cite{BLYZZ20}). If $\lambda$ is positive on nonempty open sets, these authors also showed that the latter condition is necessary, and that $K$, if it exists, is uniquely determined up to a dilatation. If $\mu$ is discrete, Semenov \cite{Sem24} found a relaxation of the Aleksandrov condition that is necessary and sufficient. Semenov \cite{Sem22} also treated a variant of the Gauss image problem for convex bodies, where both measures are discrete. In \cite{Sem23} he studied the uniqueness question for the Gauss image problem in a general version (that is, without special assumptions on $\lambda$) and obtained several new properties of the (multi-valued) radial Gauss image map.

The aim of the following is to treat an analogue of the Gauss image problem for pseudo-cones. These sets can be considered, under several aspects, as a counterpart to the convex bodies containing the origin in the interior. By definition, a pseudo-cone in $\R^n$ is a nonempty closed convex subset $K\subset\R^n$ not containing the origin and such that $\lambda K\subseteq K$ for $\lambda\ge 1$ (equivalently, a closed convex set not containing the origin and contained in its recession cone; see \cite[Thm. 3.2]{XLL23}). Thus, such a set is unbounded. Its recession cone, a closed convex cone, is denoted by $C$, and we assume in the following that it is always pointed and $n$-dimensional. Its dual cone is defined by $C^\circ=\{x\in\R^n: \langle x,y\rangle\le 0\;\forall y\in C\}$. We are interested in the pseudo-cones $K$ with given recession cone $C$, called $C$-pseudo-cones. The role that the unit sphere plays above is now split between two open subsets of the unit sphere, given by
$$ \Omega_C:= \Sn\cap {\rm int}\,C,\qquad \Omega_{C^\circ}:= \Sn\cap{\rm int}\,C^\circ$$
(or their closures, indicated by ${\rm cl}$).

Part of the following is restricted to a special class of pseudo-cones. The $C$-pseudo-cone $K$ is called {\em internal} if  the vectors of $\Omega_{C^\circ}$ are attained as outer normal vectors of $K$ only at points in the interior of $C$. If $K$ is internal, it suffices (for the treatment of $\lambda(K,\cdot)$) to define the radial Gauss image ${\boldsymbol\alpha}_K(\eta)$ only for sets $\eta\subset \Omega_C$, namely as the set of all outer unit normal vectors of $K$ at points $rv\in\partial K$, where $v\in\eta$ and $r\in (0,\infty)$. We have ${\boldsymbol\alpha}_K(\eta)\subset{\rm cl}\,\Omega_{C^\circ}$, since all outer unit normal vectors of $K$ belong to the closure of $\Omega_{C^\circ}$. 

We assume that $\lambda$ is a non-zero, finite measure on the Lebesgue measurable subsets of ${\rm cl}\,\Omega_{C^\circ}$, which is zero on Borel sets of Hausdorff dimension $n-2$ (an assumption slightly weaker than the absolute continuity demanded in \cite{BLYZZ20}). For Borel sets $\eta\subset \Omega_C$ we define $\lambda(K,\eta):= \lambda({\boldsymbol\alpha}_K(\eta))$. Let $\mu$ be a Borel measure on $\Omega_C$. The Gauss image problem for pseudo-cones asks: What are the necessary and sufficient conditions on $\lambda$ and $\mu$ in order that there is a $C$-pseudo-cone $K$ with $\lambda(K,\cdot)=\mu$?

We have already seen in \cite{Sch18, Sch21, Sch24a} that conditions on the measures appearing in Minkowski type problems for convex bodies, such as not being restricted to a great subsphere, a centroid condition for surface area measures or a subspace concentration condition for cone-volume measures, do not play a role in the case of pseudo-cones. On the other hand, new problems arise with pseudo-cones, since their surface area measures and cone-volume measures can be infinite. Since presently we assume that the measures $\lambda$ and $\mu$ are finite, the Gauss image problem for pseudo-cones turns out to be easier than for convex bodies. We shall prove the following result.

\begin{theorem}\label{T1.1}
Let $\lambda$ be a measure on the Lebesgue measurable subsets of ${\rm cl}\,\Omega_{C^\circ}$ which is non-zero, finite, and zero on Borel sets of Hausdorff dimension $n-2$. Let $\mu$ be a Borel measure on $\Omega_C$. There exists a $C$-pseudo-cone $K$ with
$$ \mu =\lambda(K,\cdot)$$
if and only if $\lambda(\Omega_{C^\circ})=\mu(\Omega_C)$.
\end{theorem}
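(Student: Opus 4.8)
The plan is to dispose of the necessity quickly and to prove existence by a variational argument whose governing functional is dilation-invariant \emph{precisely} when the mass condition holds.

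\smallskip
\noindent\textbf{Necessity.} For any internal $C$-pseudo-cone $K$ (so that $\lambda(K,\cdot)$ is defined), every $u\in\Omega_{C^\circ}$ is attained as an outer normal of $K$: since $u\in\inn C^\circ$ the linear form $\langle\,\cdot\,,u\rangle$ tends to $-\infty$ along $C$, so its supremum over $K$ is reached at a boundary point, which by internality lies in $\inn C=\{rv:v\in\Omega_C,\ r>0\}$. Hence $\Omega_{C^\circ}\subseteq{\boldsymbol\alpha}_K(\Omega_C)\subseteq\cl\Omega_{C^\circ}$, and as $\cl\Omega_{C^\circ}\setminus\Omega_{C^\circ}=\Sn\cap\partial C^\circ$ has Hausdorff dimension $n-2$ it is $\lambda$-null. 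Therefore $\mu(\Omega_C)=\lambda({\boldsymbol\alpha}_K(\Omega_C))=\lambda(\Omega_{C^\circ})$.

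\smallskip
\noindent\textbf{The variational functional.} I would describe $K$ by its support function $h_K<0$ on $\inn C^\circ$ and its radial function $\rho_K>0$ on $\Omega_C$, linked by the Legendre-type duality
$$\rho_K(v)=\max_{u\in\Omega_{C^\circ}}\frac{h_K(u)}{\langle v,u\rangle},\qquad h_K(u)=\max_{v\in\Omega_C}\rho_K(v)\langle v,u\rangle ,$$
valid because $\langle v,u\rangle<0$ for $v\in\Omega_C$, $u\in\Omega_{C^\circ}$. On internal $C$-pseudo-cones I introduce
$$\Phi(K)=\int_{\Omega_C}\log\rho_K\,{\rm d}\mu-\int_{\Omega_{C^\circ}}\log(-h_K)\,{\rm d}\lambda .$$
Since $\rho_{sK}=s\rho_K$ and $h_{sK}=sh_K$, one gets $\Phi(sK)=\Phi(K)+\bigl(\mu(\Omega_C)-\lambda(\Omega_{C^\circ})\bigr)\log s$, so the hypothesis $\mu(\Omega_C)=\lambda(\Omega_{C^\circ})$ is exactly what renders $\Phi$ invariant under dilations. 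The strategy (mirroring Aleksandrov and \cite{BLYZZ20}) is to maximize $\Phi$ and read off the solution from the first-order condition.

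\smallskip
\noindent\textbf{The Euler--Lagrange equation.} Given $\varphi\in C(\cl\Omega_C)$ with support in $\Omega_C$, I perturb the radial function logarithmically, $\log\rho_{K_t}=\log\rho_K+t\varphi$ (taking $K_t$ to be the associated pseudo-cone obtained by the standard envelope/Wulff construction). Trivially $\tfrac{{\rm d}}{{\rm d}t}\big|_{0}\int_{\Omega_C}\log\rho_{K_t}\,{\rm d}\mu=\int_{\Omega_C}\varphi\,{\rm d}\mu$. For the other term, the duality and an envelope argument show that at $\lambda$-a.e.\ $u$ the maximizer defining $h_{K_t}(u)$ is the contact radial direction $\alpha^*(u)$, whence $\tfrac{{\rm d}}{{\rm d}t}\log(-h_{K_t}(u))=\varphi(\alpha^*(u))$ and therefore
$$\frac{{\rm d}}{{\rm d}t}\Big|_{0}\int_{\Omega_{C^\circ}}\log(-h_{K_t})\,{\rm d}\lambda=\int_{\Omega_{C^\circ}}\varphi(\alpha^*(u))\,{\rm d}\lambda(u)=\int_{\Omega_C}\varphi\,{\rm d}\lambda(K,\cdot),$$
using $\lambda(K,\cdot)=\alpha^*_{\#}\lambda$. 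At a maximizer $K$ the derivative of $\Phi$ vanishes for all admissible $\varphi$, giving $\int_{\Omega_C}\varphi\,{\rm d}\mu=\int_{\Omega_C}\varphi\,{\rm d}\lambda(K,\cdot)$ and hence $\mu=\lambda(K,\cdot)$. The assumption that $\lambda$ vanishes on sets of dimension $n-2$ enters here: it lets me discard the image of the non-regular boundary points (a set of dimension $\le n-2$), so that $\alpha^*$ is defined $\lambda$-a.e.\ and the envelope computation is legitimate.

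\smallskip
\noindent\textbf{The main obstacle.} The heart of the matter is the \emph{existence} of the maximizer. By dilation-invariance I may normalize, say $\min\rho_K=1$. I then expect the real work to be a priori bounds that keep a maximizing sequence from degenerating, namely $\rho_{K_j}\to\infty$ on a set of positive $\mu$-measure near $\Sn\cap\partial C$, or $-h_{K_j}\to0$ near $\Sn\cap\partial C^\circ$, either of which would drive the corresponding $\log$-integral to $\pm\infty$. Here I would exploit the finiteness of $\lambda$ and $\mu$ together with the geometric separation of $\Omega_C$ and $\Omega_{C^\circ}$ (so that $|\langle v,u\rangle|$ is bounded below on compact sub-caps) to prove that $\Phi$ is bounded above and upper semicontinuous and that a maximizing sequence is precompact via a Blaschke-type selection theorem for $C$-pseudo-cones, with the limit internal. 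This compactness/no-escape step, rather than the formal first-order calculus, is where I expect the difficulty to concentrate.
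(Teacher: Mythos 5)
Your necessity argument is fine (and more explicit than the paper, which dismisses this direction as clear), and your Euler--Lagrange computation is in spirit exactly the paper's: the functional $\int\log\rho_K\,\D\mu-\int\log(-h_K)\,\D\lambda$ is the pseudo-cone version of Oliker's functional, and the first-order condition is worked out in the paper via Wulff shapes, the convexification $\langle f\rangle$, and the duality $[f]^*=\langle 1/f\rangle$ (Lemmas \ref{L4.2}--\ref{L4.6}). But the step you yourself flag as ``the heart of the matter'' --- existence of the extremizer --- is a genuine gap, and in your global formulation it is worse than a missing compactness estimate: the functional need not even be defined. For an internal $C$-pseudo-cone (e.g.\ $K=x_0+C$ with $x_0\in\inn C$) one has $\rho_K(v)\to\infty$ as $v\to\partial\Omega_C$, so $\int_{\Omega_C}\log\rho_K\,\D\mu$ can be $+\infty$ for a perfectly admissible finite Borel measure $\mu$ on the open set $\Omega_C$; no normalization such as $\min\rho_K=1$ repairs this, and the ``no-escape'' bounds you hope for do not exist in this generality. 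There is also a smaller but real slip in the variational step: if $K_t=\langle f_t\rangle$ is the convexification of $f_t=\rho_Ke^{t\varphi}$, then $\rho_{K_t}\le f_t$ with equality not guaranteed, so $\frac{\D}{\D t}\int\log\rho_{K_t}\,\D\mu=\int\varphi\,\D\mu$ is not ``trivial''; one must first show (as in Lemma \ref{L5.2}) that replacing $f$ by $\rho_{\langle f\rangle}$ does not increase the functional, so that the extremum over \emph{functions} is attained at a radial function and the perturbation can be carried out at the level of functions.

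The paper's device for circumventing exactly this obstacle is worth internalizing. It does not extremize over all of $\Omega_C$ at once: it fixes a compact $\eta\subset\Omega_C$, restricts $\mu$ to $\eta$, and uses the \emph{normalized} functional $\Phi_{\mu,\lambda,\eta}(f)=\frac{1}{\mu(\eta)}\int_\eta\log f\,\D\mu-\frac{1}{|\lambda|}\int\log\overline h_{\langle f\rangle}\,\D\lambda$, which is homogeneous of degree zero for \emph{any} pair of masses, is nonnegative (since $\overline h_{\langle f\rangle}(u)\le\min_\eta f$), and is continuous; a minimizing sequence normalized to have distance $1$ from $o$ converges by the selection theorem of \cite{Sch24a}. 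The Euler--Lagrange equation then yields $\mu\fed\eta=\frac{\mu(\eta)}{|\lambda|}\lambda(K_\eta,\cdot)$ with a harmless constant. Finally one exhausts $\Omega_C$ by compacta $\eta_j$, extracts a convergent subsequence of the $K_{\eta_j}$, and uses the weak continuity of $K\mapsto\lambda(K,\cdot)$ (Lemma \ref{L3.2}); the hypothesis $\lambda(\Omega_{C^\circ})=\mu(\Omega_C)$ enters only here, to force $\mu(\eta_j)/|\lambda|\to1$, rather than through dilation invariance of a global functional. Without some such localization-and-limit scheme (or a different compactness mechanism), your proposal does not yet constitute a proof of existence.
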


We have simplified here the Gauss image problem for pseudo-cones by allowing only measures $\mu$ on $\Omega_C$, and not on its closure. That this is indeed a restriction, can be seen from the following simple example. Let $K$ be the intersection of $C$ and a closed halfspace not containing the origin, which has an outer normal vector belonging to $\Omega_{C^\circ}$. Extending the definition of $\lambda(K,\cdot)$ to ${\rm cl}\,\Omega_C$, we see that this measure is concentrated on the boundary of $\Omega_C$. A generalization of Theorem \ref{T1.1} to measures on ${\rm cl}\,\Omega_C$ remains open. 

We shall prove Theorem \ref{T1.1} in Section \ref{sec5}, after some preparations. Section \ref{sec6} contains a uniqueness result.

The following has been pointed out to the author by Yiming Zhao. After applying two gnomonic projections, the Gauss image problem for $C$-pseudo-cones can be viewed as a measure transport problem in a Euclidean space of dimension $n-1$. Then a result of McCann \cite{McC95} (refining an earlier result of Brenier) comes to mind, according to which the transport can be achieved by the gradient map of a suitable convex function. However, in the setting of the current work, the sought-for transport map should be a {\em normalized} version of the gradient map. There seems to be no obvious way to directly apply the result of McCann.

As mentioned, the Gauss image problem for convex bodies reduces to Aleksandrov's integral curvature problem if one considers Lebesgue measure. Oliker \cite{Oli07} pointed out that this problem is connected to optimal mass transport, and he showed how his solution of Aleksandrov's problem implies the extremality of a certain total cost. A new solution of Aleksandrov's problem by mass transportation methods was given by Bertrand \cite{Ber16}. It appears conceivable that his proof can be extended to more general measures, and can then be carried over to pseudo-cones. The following treatment of the Gauss image problem for pseudo-cones seems to be more elementary.

\section{Fundamentals about pseudo-cones}\label{sec2}

This section fixes the notation and collects some known results. We recall that $C\subset\R^n$ is a closed convex cone, pointed and with interior points. Then its dual cone, $C^\circ$, has the same properties. A $C$-pseudo-cone $K$ is a closed convex set with $o\notin K$ and $\lambda K\subseteq K$ for $\lambda\ge 1$, with recession cone $C$. Necessarily, 
$$ K=C\cap\bigcap_{u\in \Omega_{C^\circ}} H_K^-(u),$$
where $H_K^-(u)$ is the supporting halfspace of $K$ with outer normal vector $u$. The set of all $C$-pseudo-cones is denoted by $ps(C)$, and $psi(C)$ denotes the set of all internal $C$-pseudo-cones.

If $\omega\subset\Omega_{C^\circ}$ is a nonempty compact subset, we say as in \cite[Sect. 8]{Sch18} that the $C$-pseudo-cone $K$ is $C$-determined by $\omega$ if
$$ K=C\cap\bigcap_{u\in \omega} H_K^-(u).$$
The set of all $C$-pseudo-cones which are $C$-determined by $\omega$ is denoted by $\K(C,\omega)$.

Convergence of pseudo-cones can be defined via the usual convergence of convex bodies. Let $B^n$ be the unit ball of $\R^n$ with center $o$. Let $K_1,K_2,\dots$ be a sequence in $ps(C)$. We say that this sequence converges to the $C$-pseudo-cone $K$ and write
$$ K_j\to K\quad\mbox{as }j\to \infty$$
if there exists $t_0>0$ such that $K_j\cap t_0B^n\not=\emptyset$ for $j\in\N$ and
$$ \lim_{j\to\infty} (K_j\cap tB^n)= K\cap tB^n\quad\mbox{for each }t\ge t_0.$$
This is, of course, equivalent to Definition 2 in \cite{Sch24}.

To emphasize the distinction between $\Omega_{C^\circ}$ and $\Omega_C$, we will mostly denote vectors and subsets of $\Omega_{C^\circ}$ by $u,\omega$, and vectors and subsets of $\Omega_C$ by $v,\eta$.

Let $K\in ps(C)$. We recall some notation from \cite{Sch24}. The support function of $K$ is defined by
$$ h_K(x):= \sup\{\langle x,y\rangle :y\in K\}\quad\mbox{for } x\in C^\circ.$$
Here sup can be replaced by max if $x\in{\rm int}\,C^\circ$. We have $h_K\le 0$, and therefore we also write $\overline h_K=-h_K$. 

We define the radial function of $K$ by
$$ \rho_K(v):= \min\{r>0: rv \in K\}\quad\mbox{for }v\in\Omega_C$$
(thus we define here the radial function only on $\Omega_C$). It is easy to see that $K$ is uniquely determined by its support function, as well as by its radial function.

The relations between support function and radial function are given by (1) and (2) in \cite{Sch24}, namely
\begin{equation}\label{2.1} 
\overline h_K(u) = \inf_{v\in\Omega_C}|\langle u,v\rangle|\rho_K(v)\quad\mbox{for }u\in{\rm cl}\,\Omega_{C^\circ}
\end{equation}
(where inf can be replaced by min if $u\in\Omega_{C^\circ}$) and
$$
\frac{1}{\rho_K(v)}= \min_{u\in {\rm cl}\,\Omega_{C^\circ}} \frac{|\langle v,u\rangle|}{\overline h_K(u)}\quad\mbox{for }v\in \Omega_C.
$$

The radial map $r_K:\Omega_C\to\partial K$ is defined by
$$ r_K(v):= \rho_K(v)v\quad\mbox{for }v\in \Omega_C.$$
For $\sigma\subseteq \partial K$, we denote by ${\boldsymbol \nu}_K(\sigma)$ the set of all outer unit normal vectors of $K$ at points of $\sigma$, and we repeat that the radial Gauss image of $\eta\subseteq \Omega_C$ is defined by
$$  {\boldsymbol\alpha}_K(\eta):= {\boldsymbol\nu}_K(r_K(\eta));$$
thus ${\boldsymbol\alpha}_K(\eta)\subseteq {\rm cl}\,\Omega_{C^\circ}$. 

By $\eta_K$ we denote the set of all $v\in\Omega_C$ for which the outer unit normal vector of $K$ at $r_K(v)$ is not unique. By $\omega_K\subset{\rm cl}\,\Omega_{C^\circ}$ we denote the set of all $u\in{\rm cl}\,\Omega_{C^\circ}$ which are outer unit normal vectors of $K$ at more than one point of $\partial K$. 
It is known that $\eta_K$ and $\omega_K$ have spherical Lebesgue measure zero. The set $\sigma_K:= r_K(\eta_K)$ has $(n-1)$-dimensional Hausdorff measure zero. About $\omega_K$, we are more precise. It follows from \cite[Thm. 2.2.5]{Sch14} and polarity that $\omega_K$ can be covered by countably many sets of finite $(n-2)$-dimensional Hausdorff measure, hence if we assume that the measure $\lambda$ is zero on sets of Hausdorff dimension $n-2$, then
\begin{equation}\label{2.x}
\lambda (\omega_K)=0.
\end{equation}

The Gauss map $\nu_K:\partial K\setminus\sigma_K\to \Omega_{C^\circ}$ of $K$ is defined by letting $\nu_K(y)$ be the unique outer unit normal vector of $K$ at $y\in\partial K \setminus \sigma_K$. The reverse spherical Gauss map $x_K:\Omega_{C^\circ}\setminus\omega_K\to\partial K$ is defined by letting $x_K(u)$ be the unique point of $\partial K$ at which $u\in \Omega_{C^\circ}\setminus\omega_K$ is attained as outer unit normal vector.  (Note that a vector in $\partial\Omega_{C^\circ}$ (boundary with respect to $\Sn$) may be attained as a normal vector of $K$, but never at a unique boundary point.)

For $K\in ps(C)$ we define the radial Gauss map 
$$ \alpha_K:\Omega_C\setminus\eta_K\to {\rm cl}\,\Omega_{C^\circ}\quad\mbox{by} \quad \alpha_K:= \nu_K\circ r_K.$$
We point out that the radial map and the radial Gauss map are only defined on (parts of) $\Omega_C$, not on its closure.
Modifying (2.17) in \cite{HLYZ16}, we define for $K\in psi(C)$ the reverse radial Gauss map
$$ \alpha_K^*: \Omega_{C^\circ}\setminus\omega_K\to \Omega_C\quad\mbox{by}\quad \alpha_K^*:= r_K^{-1}\circ x_K.$$
Thus, $\alpha_K^*$ is not defined on the boundary of $\Omega_{C^\circ}$. We have restricted this definition to $psi(C)$, in order that for $u\in\Omega_{C^\circ}\setminus\omega_K$ the vector $r_K^{-1}(x_K(u))$ belong to $\Omega_C$. The maps $\alpha_K$ and $\alpha_K^*$ are continuous.

On $\Omega_{C^\circ}\setminus\omega_K$ the inverse map $\alpha_K^{-1}$ is well defined, thus for $K\in psi(C)$ we have
$$ \alpha_K^{-1}=\alpha_K^*\quad\mbox{$\lambda$-almost everywhere on $\Omega_{C^\circ}$}.$$

We state that for $K\in psi(C)$
\begin{equation}\label{2.3}
{\boldsymbol\alpha}_K(\eta)\setminus\omega_K  = (\alpha_K^*)^{-1}(\eta) \quad\mbox{for }\eta\subset \Omega_C.
\end{equation}
For the proof, let $\eta\subset \Omega_C$ and $u\in\Omega_{C^\circ}\setminus\omega_K$. Then ${\bf x}_K(\{u\})=x_K(u)$ and hence
\begin{eqnarray*}
&& u\in {\boldsymbol \alpha}_K(\eta) = {\boldsymbol\nu}_K(r_K(\eta))\\
&& \Leftrightarrow x_K(u)\in r_K(\eta) \Leftrightarrow r_K^{-1}(x_K(u))\in\eta\Leftrightarrow \alpha^*_K(u)\in\eta\\
&& \Leftrightarrow u\in (\alpha_K^*)^{-1}(\eta).
\end{eqnarray*}

For a $C$-pseudo-cone $K\in ps(C)$, the copolar set is defined by
$$ K^*:= \{x\in\R^n: \langle x,y\rangle\le -1\,\forall y\in K\}.$$
Clearly, this is a pseudo-cone, and since ${\rm rec}\,K^*= ({\rm rec}\,K)^\circ=C^\circ$ by \cite[Lem. 4]{Sch24a}, it is a $C^\circ$-pseudo-cone. Therefore, the functions and maps defined above for $K$ are also well defined for $K^*$, with correspondingly changed domains. 

We have
\begin{equation}\label{2.5}
\rho_K(v)=\frac{1}{|h_{K^*}(v)|}\quad\mbox{for } v\in \Omega_C.
\end{equation}
For the proof, we refer to \cite[Thm. 3.10]{XLL23}.

According to \cite[Def. 4]{Sch24a}, a pair $(y,w)\in\R^n\times\R^n$ is a crucial pair of $K$ if $y\in\partial K$ and $w$ is an outer normal vector of $K$ at $y$, normalized so that $\langle y,w\rangle=-1$. By \cite[Lem. 6]{Sch24a} and $K^{**}=K$, we know that $(y,w)$ is a crucial pair of $K$ if and only if $(w,y)$ is a crucial pair of $K^*$. From this, it follows easily that, for $K\in psi(C)$, 
\begin{equation}\label{2.4} 
\alpha_K^*(u)= \alpha_{K^*}(u)  \quad\mbox{for }u\in \Omega_{C^\circ}\setminus\omega_K.
\end{equation}

From (\ref{2.5}), we can deduce the following. If $K_j\in ps(C)$ for $j\in\N_0$, then
$$
K_j\to K_0 \Leftrightarrow K_j^*\to K_0^*.
$$
Using (\ref{2.5}), we can also  prove the following lemma.

\begin{lemma}\label{L2.1} 
Let $K_j\in psi(C)$ for $j\in\N_0$. If $K_j\to K_0$ as $j\to \infty$, then
$$ \alpha_{K_j}^*\to \alpha_{K_0}^*$$
almost everywhere on $\Omega_{C^\circ}$, with respect to spherical Lebesgue measure.
\end{lemma}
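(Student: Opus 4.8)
The plan is to dualize. By (\ref{2.4}) the map $\alpha_{K_j}^*$ agrees, off the null set $\omega_{K_j}$, with the radial Gauss map $\alpha_{K_j^*}=\nu_{K_j^*}\circ r_{K_j^*}$ of the copolar $C^\circ$-pseudo-cone $K_j^*$, and the convergence equivalence $K_j\to K_0\Leftrightarrow K_j^*\to K_0^*$ turns the hypothesis into $K_j^*\to K_0^*$. Writing $L_j:=K_j^*$, it thus suffices to prove that the radial Gauss maps satisfy $\alpha_{L_j}\to\alpha_{L_0}$ almost everywhere on $\Omega_{C^\circ}$. Since each of $\omega_{K_j}$ and $\eta_{L_j}$ is a spherical null set, I would discard the null set $N:=\bigcup_{j\ge 0}(\omega_{K_j}\cup\eta_{L_j})$ and fix $u\in\Omega_{C^\circ}\setminus N$; on this set every map involved is defined and $\alpha_{K_j}^*(u)=\alpha_{L_j}(u)$.

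First I would show that the radial maps converge at $u$. Applying (\ref{2.5}) to $L_j$, whose copolar set is $L_j^*=K_j^{**}=K_j$, gives $\rho_{L_j}(u)=1/|h_{K_j}(u)|$. For $u\in\Omega_{C^\circ}$ the supremum defining $h_{K_j}(u)$ is attained, and the maximizer stays in a fixed ball as $j\to\infty$; hence $K_j\to K_0$ yields $h_{K_j}(u)\to h_{K_0}(u)$, with $h_{K_0}(u)<0$ because $o\notin K_0\subset C$ and $u\in{\rm int}\,C^\circ$. Consequently $\rho_{L_j}(u)\to\rho_{L_0}(u)$ and $r_{L_j}(u)=\rho_{L_j}(u)\,u\to\rho_{L_0}(u)\,u=r_{L_0}(u)=:y_0$.

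Next I would pass from the contact points to their normals. Put $w_j:=\alpha_{L_j}(u)=\nu_{L_j}(r_{L_j}(u))$, a unit vector in ${\rm cl}\,\Omega_C$. Given any subsequence, compactness of $\Sn$ provides a further subsequence with $w_{j_k}\to w$. Letting $k\to\infty$ in the supporting inequalities $\langle w_{j_k},z\rangle\le\langle w_{j_k},r_{L_{j_k}}(u)\rangle$ (valid for $z\in L_{j_k}$), and approximating any point of $L_0$ by points of the $L_{j_k}$ via $L_j\to L_0$, shows that $w$ is an outer unit normal of $L_0$ at $y_0$. Since $u\notin\eta_{L_0}$, the point $y_0=r_{L_0}(u)$ carries a unique outer normal, namely $\alpha_{L_0}(u)$; hence $w=\alpha_{L_0}(u)$. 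As every accumulation point of $(w_j)_j$ equals $\alpha_{L_0}(u)$, we conclude $\alpha_{L_j}(u)=w_j\to\alpha_{L_0}(u)$, and therefore $\alpha_{K_j}^*(u)\to\alpha_{K_0}^*(u)$ on $\Omega_{C^\circ}\setminus N$.

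I expect the third step to be the main obstacle. The continuity-of-normals argument is classical for Hausdorff-convergent compact convex bodies, but here $L_j\to L_0$ only means Hausdorff convergence of the truncations $L_j\cap tB^n$; one must check that the contact points $r_{L_j}(u)$ and the approximating points of $L_0$ lie in a fixed ball $tB^n$, which they do because $r_{L_j}(u)\to y_0$, so that the limiting argument can be carried out inside $tB^n$. One should also make precise the boundedness of the maximizers used to get $h_{K_j}(u)\to h_{K_0}(u)$, and note that the internality of $K_0$ is what guarantees that the resulting limit $\alpha_{L_0}(u)=\alpha_{K_0}^*(u)$ lies in $\Omega_C$ rather than on its boundary.
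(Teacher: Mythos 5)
Your proposal is correct, and it follows essentially the route the paper intends: the paper's proof simply defers to the argument for (3.8) in \cite{BLYZZ20} (based on Lemma 2.2 of \cite{HLYZ16}), which is exactly the combination you use of copolar duality via (\ref{2.4}) and (\ref{2.5}), convergence of the contact points $r_{L_j}(u)$ from convergence of support/radial functions, and a subsequence argument settled by uniqueness of the normal at $r_{L_0}(u)$ off a null set. The only difference is that you carry this out explicitly on the copolar side and supply the localization details (maximizers and approximating points confined to a fixed ball) that replace uniform convergence by uniform convergence on compact sets, which is precisely the adaptation the paper says is needed.
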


\begin{proof}
The arguments are so similar to those used in the proof of (3.8) in \cite{BLYZZ20} that we need not repeat them here. The uniform convergence appearing in the employed Lemma 2.2 of \cite{HLYZ16} can be replaced by uniform convergence on compact sets.
\end{proof}

Wulff shapes in cones were introduced in \cite{Sch18}, and we repeat the definition here. Given a nonempty compact set $\omega\subset\Omega_{C^\circ}$ and a continuous function $f:\omega\to (0,\infty)$, let
$$ [f]:= C\cap\bigcap_{u\in\omega} \{y\in\R^n: \langle y,u\rangle \le -f(u)\}.$$
Then $[f]\in\K(C,\omega)$, and $[f]$ is called the Wulff shape associated with $(C,\omega,f)$. We have
$$ \overline h_{[f]}(u)\ge f(u)\quad\mbox{for }u\in\omega,$$
as follows immediately from the definition. According to \cite[(15)]{Sch24}, the radial function of the Wulff shape is given by
\begin{equation}\label{4.0} 
\frac{1}{\rho_{[f]}(v)} = \min_{u\in\omega} \frac{|\langle v,u\rangle|}{f(u)}\quad\mbox{for }v\in\Omega_C.
\end{equation}

We modify a definition in \cite{HLYZ16} (see also \cite[Sect. 4]{LYZ24}). Given a nonempty compact set $\eta\subset\Omega_C$ and a continuous function $f:\eta\to(0,\infty)$, we define 
$$ \langle f\rangle :=\bigcap \{K\in ps(C): f(v)v\in K\;\forall v\in\eta\}.$$
Since $f$ is positive, there always exists $K\in ps(C)$ with $f(v)v\in K$ for all $v\in\eta$. Clearly, $\langle f\rangle$ is a $C$-pseudo-cone. It is contained in ${\rm int}\,C$, so that, in particular, $\langle f\rangle\in psi(C)$. We call $\langle f\rangle$ the {\em convexification} associated with $(C,\eta,f)$. The definition implies that
\begin{equation}\label{4.1}
\rho_{\langle f\rangle}(v) \le f(v)\quad\mbox{for }v\in\eta.
\end{equation}
For the support function of the convexification we have 
\begin{equation}\label{4.2}
\overline h_{\langle f\rangle}(u) = \min_{v\in\eta} |\langle u,v\rangle|f(v)\quad\mbox{for }u\in{\rm cl}\,\Omega_{C^\circ}.
\end{equation}
In fact, from (\ref{2.1}) and (\ref{4.1}) it follows that
$$ \overline h_{\langle f\rangle}(u) = \inf_{v\in\Omega_C}|\langle u,v\rangle|\rho_{\langle f\rangle}(v) \le \min_{v\in\eta}|\langle u,v\rangle|\rho_{\langle f\rangle}(v) \le \min_{v\in\eta}|\langle u,v\rangle|f(v).$$
On the other hand, if $\min_{v\in\eta}|\langle u,v\rangle|f(v)=:t$ then
$$ C\cap\{y\in\R^n: |\langle y,u\rangle|\ge t\}\in ps(C)$$
and hence $\overline h_{\langle f\rangle}(u)\ge t$.

We point out that, in the definition of $\langle f\rangle$, the function $f$ is defined on a compact subset of $\Omega_C$, whereas in the case of the Wulff shape it is defined on a compact subset of $\Omega_{C^\circ}$. Of course, there is also the Wulff shape $[f]$ for $f$ defined on a compact subset of $\Omega_C$; in this case, $[f]$ is a $C^\circ$-pseudo-cone. Analogously, for a function $f$ on a compact subset $\omega\subset\Omega_{C^\circ}$, the convexification associated with $(C^\circ,\omega,f)$, denoted also by $\langle f\rangle$, is a $C^\circ$-pseudo-cone. It is not necessary to use different notations, since the domain of $f$ determines whether the result is in $ps(C)$ or in $ps(C^\circ)$.

Now we can state the relation
\begin{equation}\label{4.3}
[f]^*=\langle 1/f\rangle
\end{equation}
for a continuous function $f:\eta\to(0,\infty)$ defined on a nonempty compact set $\eta\subset\Omega_C$. In fact, for $u\in \Omega_{C^\circ}$ we have by (\ref{4.0}), (\ref{4.2}), (\ref{2.5}) and $K^{**}=K$ that
$$
\rho^{-1}_{[f]}(u)= \min_{v\in\eta} |\langle u,v\rangle|f^{-1}(v) = \overline h_{\langle f^{-1}\rangle}(u)= \rho^{-1}_{\langle f^{-1}\rangle^*}(u)
$$
and thus $[f]=\langle f^{-1}\rangle^*$, hence $[f]^*=\langle f^{-1}\rangle$.

We use (\ref{4.3}) to show the following continuity property of the convexification.

\begin{lemma}\label{L4.a}
Let $\eta\subset\Omega_C$ be a nonempty compact set, and let $f_j$ be a positive, continuous function on $\eta$, for $j\in\N_0$. If the sequence $(f_j)_{j\in\N}$ converges uniformly on $\eta$ to $f_0$, then $\langle f_j\rangle\to \langle f_0\rangle$ as $j\to\infty$.
\end{lemma}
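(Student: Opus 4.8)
The plan is to prove the convergence $\langle f_j\rangle \to \langle f_0\rangle$ by passing through the copolar duality relation (\ref{4.3}), which converts convexifications into Wulff shapes, where uniform convergence of the defining data is known (or easily seen) to force convergence of the shapes. The key leverage is the already-established equivalence $K_j\to K_0 \Leftrightarrow K_j^*\to K_0^*$ together with $[f]^*=\langle 1/f\rangle$.

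\medskip

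First I would note that since $f_j\to f_0$ uniformly on the compact set $\eta$ and $f_0$ is continuous and strictly positive, there are constants $0<a\le b<\infty$ with $a\le f_j\le b$ on $\eta$ for all sufficiently large $j$ (and, after discarding finitely many terms, for all $j$). Consequently $1/f_j\to 1/f_0$ uniformly on $\eta$ as well, with $1/f_j$ bounded between $1/b$ and $1/a$. By (\ref{4.3}) we have $\langle f_j\rangle = [1/f_j]^*$ for each $j\in\N_0$, where $[1/f_j]$ is the Wulff shape (in $ps(C^\circ)$) associated with the data $(C^\circ,\eta,1/f_j)$. In view of the equivalence $K_j\to K_0 \Leftrightarrow K_j^*\to K_0^*$ recorded after (\ref{2.5}), it suffices to prove that
\begin{equation*}
[1/f_j]\to [1/f_0]\quad\mbox{as }j\to\infty,
\end{equation*}
i.e. that the Wulff shapes converge.

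\medskip

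For the Wulff-shape convergence I would work with radial functions via (\ref{4.0}), which gives, for $g_j:=1/f_j$ and each $v$ in the relevant $\Omega_{C^\circ}$,
\begin{equation*}
\frac{1}{\rho_{[g_j]}(v)} = \min_{u\in\eta}\frac{|\langle v,u\rangle|}{g_j(u)}.
\end{equation*}
Since $g_j\to g_0$ uniformly on $\eta$ with $g_j$ bounded away from $0$ and $\infty$, the functions $u\mapsto |\langle v,u\rangle|/g_j(u)$ converge uniformly on $\eta$ to $u\mapsto|\langle v,u\rangle|/g_0(u)$, locally uniformly in $v$; taking the minimum over the compact set $\eta$ preserves this, so $1/\rho_{[g_j]}(v)\to 1/\rho_{[g_0]}(v)$ locally uniformly on $\Omega_{C^\circ}$. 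Because the limit radial function is bounded away from $0$ on compact subsets, this yields $\rho_{[g_j]}\to\rho_{[g_0]}$ locally uniformly, which translates into the convergence $[g_j]\to[g_0]$ in the sense of pseudo-cones defined in Section \ref{sec2} (the uniform control from $a\le f_j\le b$ supplies a common $t_0>0$ with $[g_j]\cap t_0B^n\ne\emptyset$ and guarantees $\lim_j([g_j]\cap tB^n)=[g_0]\cap tB^n$ for $t\ge t_0$).

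\medskip

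\emph{The main obstacle} is the bookkeeping at the boundary and the domain mismatch: uniform convergence of $f_j$ on $\eta\subset\Omega_C$ must be converted into honest convergence of radial functions on all of $\Omega_{C^\circ}$ (for the dual Wulff shapes) and then into the pseudo-cone convergence of Section \ref{sec2}, which is phrased through truncations $K\cap tB^n$ rather than through the radial function. The delicate point is verifying the existence of a single $t_0$ and controlling the truncated sets uniformly near the boundary of the cone, where radial functions may blow up; this is exactly where the two-sided bound $a\le f_j\le b$ and the strict positivity of the limit are used. An alternative, perhaps cleaner, route that avoids the radial-function estimates is to invoke (\ref{4.2}) directly: uniform convergence $f_j\to f_0$ on $\eta$ gives, by $\overline h_{\langle f_j\rangle}(u)=\min_{v\in\eta}|\langle u,v\rangle|f_j(v)$, the uniform convergence of the support functions $\overline h_{\langle f_j\rangle}\to\overline h_{\langle f_0\rangle}$ on $\operatorname{cl}\Omega_{C^\circ}$, and then deduce convergence of the pseudo-cones from the fact that a pseudo-cone is determined by its support function, together with the standard correspondence between support-function convergence and pseudo-cone convergence. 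Either route reduces the lemma to routine continuity of the minimum of uniformly convergent families over a compact parameter set.
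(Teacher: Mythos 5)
Your proof is correct and takes essentially the same route as the paper: reduce to Wulff shapes via $[f]^*=\langle 1/f\rangle$ from (\ref{4.3}) together with the duality $K_j\to K_0\Leftrightarrow K_j^*\to K_0^*$, after noting that $f_j^{-1}\to f_0^{-1}$ uniformly. The only difference is that the paper simply cites Lemma 5 of \cite{Sch18} for the continuity of the Wulff shape construction, whereas you prove that step directly from the radial-function formula (\ref{4.0}); both are fine.
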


\begin{proof} 
If $f_j\to f_0$ uniformly on $\eta$, where $f_0$ has a positive minimum, then also $f_j^{-1}\to f_0^{-1}$ uniformly on $\eta$. By Lemma 5 of \cite{Sch18} the Wulff shapes satisfy $[f_j^{-1}]\to [f_0^{-1}]$. By (\ref{4.3}) and (\ref{2.5}) this implies the assertion.
\end{proof}

Given a nonempty compact subset $\omega$ of $\Omega_{C^\circ}$, we have defined the $C$-pseudo-cones that are $C$-determined by $\omega$. Correspondingly, given a nonempty compact subset $\eta$ of $\Omega_C$ we now say that $K\in ps(C)$ is {\em $C$-defined by} $\eta$ if $K=\langle \rho_K|_\eta\rangle$, that is, if $K$ is the convexification associated with $(C,\eta,\rho_K|_\eta)$, where $\rho_K|_\eta$ is the restriction of the radial function of $K$ to $\eta$. We remark that also
$$ \langle \rho_K|_\eta\rangle = r_K(\eta)+C,$$
so that $\langle \rho_K|_\eta\rangle\in psi(K)$. The set of all $C$-pseudo-cones that are $C$-defined by $\eta$ is denoted by $\K^*(C,\eta)$. We have
\begin{equation}\label{4.3b}
K\in\K^*(C,\eta)\Leftrightarrow K^*\in \K(C^\circ,\eta).
\end{equation}
This can be proved along the following lines, using \cite[Lem. 6]{Sch24a}:
\begin{eqnarray*}
&& K\in \K^*(C,\eta)\\
&& \Leftrightarrow \mbox{if $(y,w)$ is a crucial pair of $K$ with $y/\|y\|\notin\eta$, then $w\in\partial C^\circ$}\\
&& \Leftrightarrow \mbox{if $(w,y)$ is a crucial pair of $K^*$ with $y/\|y\|\notin\eta$, then $w\in\partial C^\circ$}\\
&& \Leftrightarrow K^*\in \K(C^\circ,\eta).
\end{eqnarray*}

Concerning (\ref{4.3b}), we remark that (for compact $\eta\subset\Omega_C$) $K^*\in\K(C^\circ,\eta)$ is always $C^\circ$-full (that is $C^\circ\setminus K^*$ is bounded), but $K\in\K^*(C,\eta)$ is never $C$-full.

\section{About $\lambda(K,\cdot)$}\label{sec3}

In this section we assume that $K\in psi(C)$. Let $\lambda$ be a non-zero finite measure on the $\sigma$-algebra of Lebesgue measurable subsets of ${\rm cl}\,\Omega_{C^\circ}$ which vanishes on Borel sets of Hausdorff dimension $n-2$. As already mentioned, we define
$$ \lambda(K,\eta):= \lambda({\boldsymbol\alpha}_K(\eta)),\quad\eta\subseteq \Omega_C\mbox{ Borel}.$$
That ${\boldsymbol\alpha}_K(\eta)$ is indeed Lebesgue measurable, follows from the continuity of the radial map and \cite[Lem. 2.2.13]{Sch14}.

We state that, for $\eta_1,\eta_2\subseteq \Omega_C$,
$$ \eta_1\cap\eta_2=\emptyset\Rightarrow {\boldsymbol\alpha}_K(\eta_1)\cap {\boldsymbol\alpha}_K(\eta_2)\in\omega_K.$$
In fact, suppose that $\eta_1\cap\eta_2=\emptyset$ and let $u\in{\boldsymbol\alpha}_K(\eta_1)\cap {\boldsymbol\alpha}_K(\eta_2)$. Then $u\in{\boldsymbol \nu}_K(r_K(\eta_i))$ for $i=1,2$, hence $u$ is a normal vector of $K$ at a point $y_1\in r_K(\eta_1)$ and a point $y_2\in r_K(\eta_2)$. Since $r_K(\eta_1)\cap r_K(\eta_2)=\emptyset$, we have $y_1\not= y_2$ and thus $u\in \omega_K$. 

It follows that if $\eta_1,\eta_2,\dots$ is a sequence of pairwise disjoint Borel sets in $\Omega_C$, then the sets of the sequence ${\boldsymbol\alpha}_K(\eta_1), 
{\boldsymbol\alpha}_K(\eta_2)\dots$ are pairwise disjoint up to a set belonging to $\omega_K$. By (\ref{2.x}), $\lambda(\omega_K)=0$. Now it follows as in \cite{BLYZZ20} that $\lambda(K,\cdot)$ is a measure. Then, in particular, $\lambda(\partial\Omega_{C^\circ})=0$.

Since now $\lambda(K,\cdot)$ is a measure, it follows from (\ref{2.3}) that $\lambda(K,\cdot)$ is the image measure (or push-forward) of $\lambda$ under the continuous and hence measurable map $\alpha_K^*:\Omega_{\C^\circ}\setminus \omega_K\to \Omega_C$. We write this as
\begin{equation}\label{3.a}
\lambda(K,\cdot)= (\alpha_K^*)\texttt{\#}\lambda.
\end{equation}

Below we shall use tacitly the following lemma.

\begin{lemma}\label{4.0a}
If $\eta\subset\Omega_C$ is compact and $K\in\K^*(C,\eta)$, then $\lambda(K,\cdot)$ is concentrated on $\eta$.
\end{lemma}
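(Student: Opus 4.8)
The plan is to prove directly that the measure $\lambda(K,\cdot)$ assigns no mass to $\Omega_C\setminus\eta$; since $\lambda(K,\cdot)$ is a measure, this is exactly what ``concentrated on $\eta$'' means. By definition $\lambda(K,\Omega_C\setminus\eta)=\lambda({\boldsymbol\alpha}_K(\Omega_C\setminus\eta))$, and $\Omega_C\setminus\eta$ is a Borel set (open minus compact), so ${\boldsymbol\alpha}_K(\Omega_C\setminus\eta)$ is Lebesgue measurable and the right-hand side makes sense. Thus it suffices to show that ${\boldsymbol\alpha}_K(\Omega_C\setminus\eta)$ is contained in a $\lambda$-null set.

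The key step exploits the representation $K=\langle\rho_K|_\eta\rangle=r_K(\eta)+C$, which is available because $K\in\K^*(C,\eta)$. Fix $u\in\Omega_{C^\circ}=\Sn\cap\inn C^\circ$. For every $c\in C$ one has $\langle c,u\rangle\le 0$, with equality only for $c=o$ (the standard characterization of the interior of the dual cone: a vector in $\inn C^\circ$ is strictly negative on $C\setminus\{o\}$). Since $r_K(\eta)$ is compact and $K=r_K(\eta)+C$, the contribution of the recession cone drops out of the support function, giving
\[ h_K(u)=\max_{v'\in\eta}\langle r_K(v'),u\rangle \qquad (u\in\Omega_{C^\circ}). \]
Now suppose $u\in\Omega_{C^\circ}$ is an outer normal vector of $K$ at a point $y=r_K(v)$ with $v\in\Omega_C$. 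Writing $y=r_K(v')+c$ with $v'\in\eta$ and $c\in C$, the equality $\langle y,u\rangle=h_K(u)$ forces both $\langle c,u\rangle=0$ and $\langle r_K(v'),u\rangle=h_K(u)$; hence $c=o$ and $y=r_K(v')$. Comparing the directions of the nonzero point $y$ yields $v=y/\|y\|=v'\in\eta$.

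Consequently no vector of $\Omega_{C^\circ}$ can be an outer normal of $K$ at $r_K(v)$ with $v\notin\eta$. Since every outer normal vector of $K$ lies in ${\rm cl}\,\Omega_{C^\circ}$, this shows
\[ {\boldsymbol\alpha}_K(\Omega_C\setminus\eta)\subseteq{\rm cl}\,\Omega_{C^\circ}\setminus\Omega_{C^\circ}=\partial\Omega_{C^\circ}. \]
As recorded earlier, $\lambda(\partial\Omega_{C^\circ})=0$, whence $\lambda(K,\Omega_C\setminus\eta)=0$, which is the assertion.

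I expect the only genuine subtlety to be the choice of route rather than any hard estimate. An alternative would be to pass to the copolar set, using $K\in\K^*(C,\eta)\Leftrightarrow K^*\in\K(C^\circ,\eta)$ from (\ref{4.3b}), the identity $\alpha_K^*=\alpha_{K^*}$ from (\ref{2.4}), and the push-forward formula (\ref{3.a}), and then arguing that $\alpha_{K^*}$ takes values in $\eta$ because the boundary points of $K^*$ lying in directions interior to $C^\circ$ sit on the cutting hyperplanes with normals in $\eta$. The drawback of that route is that it naturally yields $\alpha_K^*(u)\in\eta$ only off the set $\eta_{K^*}$ of non-unique normals of $K^*$, a set that is Lebesgue-null but need not be $\lambda$-null (recall $\lambda$ is only assumed to vanish on sets of Hausdorff dimension $n-2$); one would then have to upgrade to ``for every $u$'' using continuity of $\alpha_K^*$ and closedness of $\eta$. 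The direct argument above avoids this measure-theoretic bookkeeping altogether, which is why I would carry it out as presented.
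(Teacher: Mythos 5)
Your proof is correct and follows the same route as the paper: show that every outer normal of $K$ attained at $r_K(v)$ with $v\notin\eta$ must lie in $\partial\Omega_{C^\circ}$, and invoke $\lambda(\partial\Omega_{C^\circ})=0$. The paper simply asserts this normal-vector claim, whereas you justify it via the decomposition $K=r_K(\eta)+C$ and the strict negativity of $u\in{\rm int}\,C^\circ$ on $C\setminus\{o\}$ — a welcome filling-in of the detail, not a different argument.
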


\begin{proof}
Setting $\eta':=\Omega_C\setminus\eta$, any outer unit normal vector of $K$ at $r_K(v)$ with $v\in\eta'$ belongs to $\partial \Omega_{C^\circ}$. Thus, ${\boldsymbol\alpha}_K(\eta')\subset\partial \Omega_{C^\circ}$ and hence $\lambda({\boldsymbol\alpha}_K(\eta'))=0$.
\end{proof}

The following lemma corresponds to Lemma 3.3 in \cite{BLYZZ20}, but the approach is different.

\begin{lemma}\label{L3.1}
Let $K\in psi(C)$, and let $g:\Omega_C\to\R$ be a bounded Borel function. Then
\begin{equation}\label{3.1}
\int_{\Omega_{C^\circ}} g(\alpha_K^*(u))\,\lambda(\D u) = \int_{\Omega_C} g(v)\,\lambda(K,\D v).
\end{equation}
\end{lemma}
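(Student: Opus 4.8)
The plan is to establish the change-of-variables formula (\ref{3.1}) by exploiting the push-forward identity (\ref{3.a}), namely $\lambda(K,\cdot)=(\alpha_K^*)\texttt{\#}\lambda$. Indeed, equation (\ref{3.1}) is precisely the defining property of an image measure: by the standard theory of push-forward measures, for any bounded Borel function $g$ on $\Omega_C$ one has
$$\int_{\Omega_C} g\,\D\big((\alpha_K^*)\texttt{\#}\lambda\big) = \int_{\Omega_{C^\circ}} (g\circ\alpha_K^*)\,\D\lambda,$$
and substituting (\ref{3.a}) turns the left-hand side into $\int_{\Omega_C}g(v)\,\lambda(K,\D v)$, which is exactly the desired identity. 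So at the conceptual level the lemma is an immediate consequence of what has already been set up.

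First I would verify that $\alpha_K^*$ is a legitimate measurable map in the sense needed: it is defined on $\Omega_{C^\circ}\setminus\omega_K$, it is continuous there (as noted in Section \ref{sec2}), and by (\ref{2.x}) the excluded set $\omega_K$ has $\lambda$-measure zero, so integrating over $\Omega_{C^\circ}$ versus $\Omega_{C^\circ}\setminus\omega_K$ makes no difference to either side. Second, I would recall the textbook abstract change-of-variables theorem for image measures (often attributed to the ``transport of measure'' or substitution principle): if $T:(X,\mathcal{A},\lambda)\to(Y,\mathcal{B})$ is measurable and $T\texttt{\#}\lambda$ denotes the image measure $\eta\mapsto\lambda(T^{-1}\eta)$, then $\int_Y g\,\D(T\texttt{\#}\lambda)=\int_X (g\circ T)\,\D\lambda$ for every bounded (or nonnegative) Borel $g$. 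Applying this with $T=\alpha_K^*$, $X=\Omega_{C^\circ}\setminus\omega_K$, $Y=\Omega_C$, and using (\ref{3.a}) to identify $T\texttt{\#}\lambda$ with $\lambda(K,\cdot)$ yields (\ref{3.1}) directly.

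For self-containedness one may prefer not to invoke the abstract theorem as a black box but instead to carry out the standard three-step reduction: prove the identity first for indicator functions $g=\mathbbm{1}_\eta$ with $\eta\subset\Omega_C$ Borel, where it reads $\lambda((\alpha_K^*)^{-1}(\eta))=\lambda(K,\eta)$ and follows from (\ref{2.3}) together with $\lambda(\omega_K)=0$; then extend by linearity to nonnegative simple functions; and finally pass to general bounded Borel $g$ by approximating with simple functions and invoking monotone (or dominated) convergence, the boundedness of $g$ and finiteness of $\lambda$ guaranteeing that all integrals are finite and the limits may be exchanged.

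I do not expect any genuine obstacle here, since the essential content has been front-loaded into (\ref{3.a}) and (\ref{2.3}). The only points requiring mild care are bookkeeping ones: ensuring that the null set $\omega_K$ is harmlessly discarded on both sides (handled by (\ref{2.x})), and confirming measurability of $g\circ\alpha_K^*$, which is immediate from the continuity of $\alpha_K^*$ and the Borel measurability of $g$. Thus the proof is essentially a citation of the image-measure change-of-variables formula, and the main ``work'' is simply recognizing that (\ref{3.1}) is nothing but that formula read through the lens of (\ref{3.a}).
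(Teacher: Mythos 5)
Your proposal is correct and follows exactly the paper's own argument: the paper likewise deduces (\ref{3.1}) from the push-forward identity (\ref{3.a}) via the transformation theorem for image measures, and then discards the $\lambda$-null set $\omega_K$ using (\ref{2.x}). The optional three-step reduction via indicator functions is a harmless elaboration of the same route.
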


\begin{proof}
By (\ref{3.a}), the transformation theorem for integrals gives that, for each bounded Borel function $g:\Omega_C\to\R$,
$$ \int_{\Omega_C} g(v)\,\lambda(K,\D v) = \int_{\Omega_{C^\circ}\setminus\omega_K} g(\alpha_K^*(u))\,\lambda(\D u).$$
Since $\omega_K$ has $\lambda$-measure zero, we can also write
$$
\int_{\Omega_C} g(v)\,\lambda(K,\D v) = \int_{\Omega_{C^\circ}} g(\alpha_K^*(u))\,\lambda(\D u),
$$
as stated.
\end{proof}

A first consequence is the following weak continuity.

\begin{lemma}\label{L3.2}
Let $K_j\in psi(C)$ for $j\in\N_0$. Then $K_j\to K_0$ as $j\to \infty$ implies the weak convergence $\lambda(K_j,\cdot) \stackrel{w}{\to}  \lambda(K_0,\cdot)$.
\end{lemma}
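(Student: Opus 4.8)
The plan is to establish the weak convergence $\lambda(K_j,\cdot)\stackrel{w}{\to}\lambda(K_0,\cdot)$ by testing against continuous bounded functions and using the integral representation from Lemma \ref{L3.1}. Recall that weak convergence of finite measures means that $\int_{\Omega_C} g\,\D\lambda(K_j,\cdot)\to\int_{\Omega_C} g\,\D\lambda(K_0,\cdot)$ for every bounded continuous function $g:\Omega_C\to\R$. By Lemma \ref{L3.1}, each such integral can be rewritten on the dual side as
\begin{equation}\label{3.weak}
\int_{\Omega_C} g(v)\,\lambda(K_j,\D v)=\int_{\Omega_{C^\circ}} g(\alpha_{K_j}^*(u))\,\lambda(\D u),\qquad j\in\N_0.
\end{equation}
Thus the problem is reduced to showing that the right-hand integrals converge, that is, that the integrands $g\circ\alpha_{K_j}^*$ converge to $g\circ\alpha_{K_0}^*$ in a way that passes to the limit under the $\lambda$-integral.

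First I would invoke Lemma \ref{L2.1}: since $K_j\to K_0$ with all $K_j\in psi(C)$, the reverse radial Gauss maps satisfy $\alpha_{K_j}^*\to\alpha_{K_0}^*$ almost everywhere on $\Omega_{C^\circ}$ with respect to spherical Lebesgue measure. Because $\lambda$ vanishes on Borel sets of Hausdorff dimension $n-2$ and is finite, it is in particular absolutely continuous in the relevant sense, so this Lebesgue-almost-everywhere convergence is also $\lambda$-almost-everywhere convergence on $\Omega_{C^\circ}$. Next, since $g$ is continuous on $\Omega_C$, composition preserves the pointwise limit: $g(\alpha_{K_j}^*(u))\to g(\alpha_{K_0}^*(u))$ for $\lambda$-almost every $u\in\Omega_{C^\circ}$. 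The functions $g\circ\alpha_{K_j}^*$ are uniformly bounded by $\sup|g|$, and $\lambda$ is a finite measure, so the dominated convergence theorem applies and yields
\begin{equation*}
\int_{\Omega_{C^\circ}} g(\alpha_{K_j}^*(u))\,\lambda(\D u)\longrightarrow \int_{\Omega_{C^\circ}} g(\alpha_{K_0}^*(u))\,\lambda(\D u).
\end{equation*}
Combining this with \eqref{3.weak} for $j\in\N$ and for $j=0$ gives the desired convergence of the integrals of $g$, hence the weak convergence of the measures.

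The main obstacle I anticipate is justifying the transition from Lebesgue-almost-everywhere convergence (as delivered by Lemma \ref{L2.1}) to $\lambda$-almost-everywhere convergence, together with ensuring the integrand is well defined off the exceptional set. The maps $\alpha_{K_j}^*$ are only defined on $\Omega_{C^\circ}\setminus\omega_{K_j}$, and the convergence in Lemma \ref{L2.1} holds off a Lebesgue-null set; I would need the union of all these exceptional sets to be $\lambda$-null. Each $\omega_{K_j}$ is coverable by countably many sets of finite $(n-2)$-dimensional Hausdorff measure, so by the hypothesis on $\lambda$ each has $\lambda$-measure zero, and a countable union of $\lambda$-null sets is $\lambda$-null; the Lebesgue-null convergence exceptional set is likewise $\lambda$-null since $\lambda$ assigns no mass to sets of dimension $n-2$ and, being finite and vanishing on such sets, is absolutely continuous relative to the ambient Lebesgue measure on the relevant portion. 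Once this measure-theoretic bookkeeping is in place, the remainder is the routine dominated convergence argument sketched above.
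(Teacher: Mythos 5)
Your overall strategy is exactly the paper's: rewrite both integrals via Lemma \ref{L3.1}, obtain pointwise convergence of $\alpha_{K_j}^*$ from Lemma \ref{L2.1}, compose with the continuous test function $g$, and conclude by dominated convergence. The one step that does not hold as you justify it is the passage from Lebesgue-almost-everywhere to $\lambda$-almost-everywhere convergence. A finite measure that vanishes on Borel sets of Hausdorff dimension $n-2$ need \emph{not} be absolutely continuous with respect to spherical Lebesgue measure --- the paper explicitly describes its hypothesis as ``slightly weaker than the absolute continuity demanded in \cite{BLYZZ20}''; for instance, $\lambda$ could charge a compact set of Hausdorff dimension strictly between $n-2$ and $n-1$, which is Lebesgue-null. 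So ``Lebesgue-null'' does not in general imply ``$\lambda$-null'' here, and your appeal to absolute continuity is a genuine gap as written.

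The repair is the observation you already make for the domains of definition, applied to the whole exceptional set: the set where the convergence of Lemma \ref{L2.1} can fail is not an arbitrary Lebesgue-null set but is contained in $\omega_{K_0}\cup\bigcup_{j\in\N}\omega_{K_j}$, since $x_{K_j}(u)\to x_{K_0}(u)$ (hence $\alpha_{K_j}^*(u)\to\alpha_{K_0}^*(u)$) at every $u$ at which all these maps are single-valued. Each $\omega_{K_j}$ is coverable by countably many sets of finite $(n-2)$-dimensional Hausdorff measure, so each is $\lambda$-null by (\ref{2.x}), and the countable union is $\lambda$-null. With the exceptional set identified in this structural way rather than merely as Lebesgue-null, the integrands $g\circ\alpha_{K_j}^*$ converge $\lambda$-almost everywhere, they are uniformly bounded by $\sup|g|$, and your dominated convergence argument goes through and coincides with the paper's proof.
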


\begin{proof}
Let $g:\Omega_C\to\R$ be continuous and bounded. By (\ref{3.1}) we have
$$ \int_{\Omega_C} g(v)\,\lambda(K_j,\D v) = \int_{\Omega_{C^\circ}} g(\alpha_{K_j}^*(u))\,\lambda(\D u).$$
The map $\alpha_{K_j}^*$ is continuous on its domain, and it is defined on $\Omega_{C^\circ}\setminus \omega_{K_j}$, where $\omega_{K_j}$ has spherical Lebesgue measure zero. Therefore, the functions $g\circ\alpha_{K_j}^*$ are measurable, and they are uniformly bounded. Now Lemma \ref{L3.1} and the dominated convergence theorem show that
$$ \int_{\Omega_C} g(v)\,\lambda(K_j,\D v) \to \int_{\Omega_C} g(v)\,\lambda(K_0,\D v) \quad\mbox{as }j\to\infty,$$
which gives the assertion.
\end{proof}

\section{Variational lemmas}\label{sec4}

The following lemma, which was proved by adapting arguments from \cite{HLYZ16}, is Lemma 5.4 in \cite{LYZ24} and also Lemma 9 in \cite{Sch24}. 

\begin{lemma}\label{L4.1}
Let $\omega\subset\Omega_{C^\circ}$ be nonempty and compact, and let $f_0:\omega\to (0,\infty)$ and $g:\omega\to \R$ be continuous. Define $f_t$ by 
$$ \log f_t(u)= \log f_0(u)+tg(u)\quad\mbox{for } u\in\omega,$$
for $|t|\le\delta$, for some $\delta>0$. Let $[f_t]$ be the Wulff shape associated with $(C,\omega,f_t)$. Then, for almost all $v\in\Omega_C$,
$$ \lim_{t\to 0} \frac{\log \rho_{[f_t]}(v) -\log\rho_{[f_0]}(v)}{t}= g(\alpha_{[f_0]}(v)). $$
\end{lemma}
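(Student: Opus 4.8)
The plan is to exploit formula (\ref{4.0}), which after taking logarithms expresses $\log\rho_{[f_t]}(v)$ as a maximum, over the compact set $\omega$, of functions that are affine in the parameter $t$. Writing $\Phi(u,v):=\log f_0(u)-\log|\langle v,u\rangle|$ (well defined and jointly continuous, since $\langle v,u\rangle<0$ for $v\in\Omega_C$ and $u\in\omega\subset\Omega_{C^\circ}$), the relation $\log f_t=\log f_0+tg$ turns (\ref{4.0}) into
$$\log\rho_{[f_t]}(v)=\max_{u\in\omega}\bigl(\Phi(u,v)+tg(u)\bigr).$$
This is the standard setting for an envelope (Danskin-type) differentiation, and the whole proof reduces to differentiating this maximum in $t$ at $t=0$.

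First I would fix $v$ and record the elementary two-sided bound for the difference quotient. Let $u^\ast$ be a maximizer at $t=0$ and let $u_t$ be a maximizer for the parameter $t$; both exist by compactness of $\omega$ and continuity of the objective. Comparing the value at $u_t$ with the value at $u^\ast$ gives, for $t>0$,
$$g(u^\ast)\le\frac{\log\rho_{[f_t]}(v)-\log\rho_{[f_0]}(v)}{t}\le g(u_t),$$
with the inequalities reversed for $t<0$. Thus it suffices to show $g(u_t)\to g(u^\ast)$ as $t\to0$, which I would obtain from upper semicontinuity of the argmax correspondence (Berge's maximum theorem): since $t\mapsto\log\rho_{[f_t]}(v)$ is continuous, every limit point of $(u_t)$ is a maximizer at $t=0$; once that maximizer is \emph{unique} it must equal $u^\ast$, so $u_t\to u^\ast$ and $g(u_t)\to g(u^\ast)$ by continuity of $g$.

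It remains to identify $u^\ast$ and to verify uniqueness for almost all $v$. A maximizer $u^\ast$ of $\Phi(\cdot,v)$ is exactly a point of $\omega$ attaining the minimum in (\ref{4.0}), so $|\langle r_{[f_0]}(v),u^\ast\rangle|=f_0(u^\ast)$, that is, $\langle r_{[f_0]}(v),u^\ast\rangle=-f_0(u^\ast)$. Since every $y\in[f_0]$ satisfies $\langle y,u^\ast\rangle\le-f_0(u^\ast)$, the vector $u^\ast$ is an outer normal vector of $[f_0]$ at the boundary point $r_{[f_0]}(v)$. For $v\notin\eta_{[f_0]}$ this outer normal is unique, hence equals $\alpha_{[f_0]}(v)$; in particular any two maximizers would be outer normals at the same boundary point and must coincide, so the maximizer is unique and equals $\alpha_{[f_0]}(v)$. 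As $\eta_{[f_0]}$ has spherical Lebesgue measure zero, the limit $g(u^\ast)=g(\alpha_{[f_0]}(v))$ holds for almost all $v\in\Omega_C$, which is the assertion.

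The main obstacle is the convergence $u_t\to u^\ast$ of the maximizers: the squeeze closes only because the maximizer at $t=0$ is unique for a.e.\ $v$, and establishing this uniqueness (together with the identification with $\alpha_{[f_0]}(v)$) through the active-constraint description of the Wulff shape is the crux. Everything else — attainment of the maxima, joint continuity of $\Phi$, continuity of $t\mapsto\log\rho_{[f_t]}(v)$, and boundedness of the difference quotient — is routine.
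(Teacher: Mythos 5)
Your proof is correct. The paper itself does not prove Lemma~\ref{L4.1} --- it quotes it as Lemma~5.4 of \cite{LYZ24} and Lemma~9 of \cite{Sch24}, both obtained by adapting Lemma~4.3 of \cite{HLYZ16} --- and your Danskin-type envelope argument (two-sided squeeze of the difference quotient, upper semicontinuity of the argmax, and identification of the maximizer with the unique outer normal at $r_{[f_0]}(v)$ for $v\notin\eta_{[f_0]}$) is essentially that standard proof, here written out self-contained via formula (\ref{4.0}).
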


(Note that $[f_0]\in\K(C,\omega)$, so that the outer unit normal vectors of $K$ at $r_K(\Omega_C)$ belong to $\omega$. Therefore, $\alpha_{[f_0]}(v)\in\omega$, so that $g(\alpha_{[f_0]}(v))$ is well defined for $v\in\Omega_C\setminus\eta_K$.)

We reformulate this lemma, interchanging the roles of $C$ and $C^\circ$.

\begin{lemma}\label{L4.2}
Let $\eta\subset\Omega_C$ be nonempty and compact, and let $f_0:\eta\to (0,\infty)$ and $g:\eta\to \R$ be continuous. Define $f_t$ by 
$$ \log f_t(v)= \log f_0(v)+tg(v)\quad\mbox{for } v\in\eta$$
for $|t|\le\delta$, for some $\delta>0$. Let $[f_t]$ be the Wulff shape associated with $(C^\circ,\eta,f_t)$. Then, for almost all $u\in\Omega_{C^\circ}$,
$$ \lim_{t\to 0} \frac{\log \rho_{[f_t]}(u) -\log\rho_{[f_0]}(u)}{t}= g(\alpha_{[f_0]}(u)). $$
\end{lemma}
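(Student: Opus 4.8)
The plan is to obtain Lemma~\ref{L4.2} directly from Lemma~\ref{L4.1} by applying the latter with the cone $C^\circ$ in place of $C$. This works because the hypotheses and conclusion of Lemma~\ref{L4.1} are formulated symmetrically in a cone and its dual, and because duality is involutive on the class of cones under consideration. Thus no new analysis is required; the task is purely one of consistent relabeling.

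First I would recall from Section~\ref{sec2} that $C^\circ$ is again a pointed, $n$-dimensional closed convex cone, so that Lemma~\ref{L4.1} applies verbatim after the substitution $C\mapsto C^\circ$. Under this substitution the two spherical domains are exchanged: the set on which the data in Lemma~\ref{L4.1} live is $\Omega_{(C^\circ)^\circ}$, and by biduality $(C^\circ)^\circ=C$, so this set is precisely $\Omega_C$. Hence the compact set $\omega$ of Lemma~\ref{L4.1} becomes our compact set $\eta\subset\Omega_C$, and the continuous functions $f_0,g$ on $\eta$, together with the logarithmic perturbation $\log f_t=\log f_0+tg$, play exactly the roles of the corresponding objects there.

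Next I would track the Wulff shape and its radial function through the substitution. With the cone taken to be $C^\circ$, the Wulff shape $[f_t]$ associated with $(C^\circ,\eta,f_t)$ is a $C^\circ$-pseudo-cone, and its radial function is accordingly evaluated on $\Omega_{C^\circ}$. Renaming the radial variable from $v$ to $u$, the conclusion of Lemma~\ref{L4.1} reads: for almost all $u\in\Omega_{C^\circ}$,
$$\lim_{t\to 0}\frac{\log\rho_{[f_t]}(u)-\log\rho_{[f_0]}(u)}{t}=g(\alpha_{[f_0]}(u)),$$
which is exactly the assertion of Lemma~\ref{L4.2}. The radial Gauss map $\alpha_{[f_0]}$ here is the one for the $C^\circ$-pseudo-cone $[f_0]$, and the parenthetical remark following Lemma~\ref{L4.1} guarantees that $\alpha_{[f_0]}(u)\in\eta$ off an exceptional set, so that $g(\alpha_{[f_0]}(u))$ is well defined almost everywhere.

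I do not expect a genuine obstacle: the entire content of Lemma~\ref{L4.2} is already contained in Lemma~\ref{L4.1}, and the only step requiring care is the bookkeeping of the dual roles. Specifically, one must check that under $C\mapsto C^\circ$ every object—the sphere pieces $\Omega_C$ and $\Omega_{C^\circ}$, the class of pseudo-cones, the radial function, and the radial Gauss map $\alpha$—is reinterpreted consistently. Since all of these notions were set up symmetrically for a cone and its dual in Section~\ref{sec2}, this verification is routine, and the proof reduces to the single sentence that Lemma~\ref{L4.2} is Lemma~\ref{L4.1} applied to $C^\circ$.
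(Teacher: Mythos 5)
Your proposal is correct and is exactly what the paper does: Lemma \ref{L4.2} is obtained from Lemma \ref{L4.1} simply by interchanging the roles of $C$ and $C^\circ$ (using $(C^\circ)^\circ=C$), which the paper states in one sentence without further argument. Your careful bookkeeping of the relabeled objects matches the intended reading.
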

(Note that the mapping $\alpha_{[f_0]}$, according to the domain of $f_0$, now goes from a part of $\Omega_{C^\circ}$ to $\Omega_C$.)

The following two lemmas correspond to Section 4 in \cite{BLYZZ20}.

\begin{lemma}\label{L4.3}
Let $\eta\subset\Omega_C$ be nonempty and compact, and let $f_0:\eta\to (0,\infty)$ and $g:\eta\to \R$ be continuous. Define $f_t$ by 
$$ \log f_t(v)= \log f_0(v)+tg(v)\quad\mbox{for } v\in\eta$$
for $|t|\le\delta$, for some $\delta>0$. Let $\langle f_t\rangle$ be the convexification associated with $(C,\eta,f_t)$.
 
\noindent $\rm (a)$ For almost all $u\in\Omega_{C^\circ}$,
\begin{equation}\label{4.4} 
\lim_{t\to 0} \frac{\log\overline h_{\langle f_t\rangle}(u) -\log \overline h_{\langle f_0\rangle}(u)}{t}=  g(\alpha^*_{\langle f_0\rangle}(u)). 
\end{equation}

\noindent $\rm (b)$ A constant $M$ and the constant $\delta>0$ can be chosen such that
$$ |\log\overline h_{\langle f_t\rangle}(u) -\log \overline h_{\langle f_0\rangle}|\le M|t|$$
for all $u\in \Omega_{C^\circ}$ and all $|t|\le\delta$.
\end{lemma}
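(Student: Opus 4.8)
The plan is to reduce part (a) to Lemma \ref{L4.2} via copolar duality, while treating part (b) directly from the explicit formula (\ref{4.2}) for the support function of a convexification. I would handle (b) first, as it is the elementary part and is logically independent of the rest.

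For part (b), I would start from (\ref{4.2}), which gives $\overline h_{\langle f_t\rangle}(u)=\min_{v\in\eta}|\langle u,v\rangle|f_t(v)$. Since $g$ is continuous on the compact set $\eta$, it is bounded, say $|g|\le G$ on $\eta$; then $e^{-|t|G}\le f_t(v)/f_0(v)=e^{tg(v)}\le e^{|t|G}$ uniformly in $v\in\eta$. Because the factor $e^{\pm|t|G}$ does not depend on $v$, it may be pulled out of the minimum, yielding
$$ e^{-|t|G}\,\overline h_{\langle f_0\rangle}(u)\le \overline h_{\langle f_t\rangle}(u)\le e^{|t|G}\,\overline h_{\langle f_0\rangle}(u) $$
for every $u\in\Omega_{C^\circ}$; here all values are positive, since $|\langle u,v\rangle|>0$ for $u\in\Omega_{C^\circ}$, $v\in\eta$, and $f_0$ has positive minimum. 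Taking logarithms gives the assertion of (b) with $M:=G$, valid for any $\delta>0$.

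For part (a), the key identity I would establish first is
$$ \overline h_{\langle f_t\rangle}(u)=\frac{1}{\rho_{[1/f_t]}(u)}\qquad\text{for }u\in\Omega_{C^\circ}, $$
where $[1/f_t]$ is the Wulff shape associated with $(C^\circ,\eta,1/f_t)$, a $C^\circ$-pseudo-cone. This follows by combining (\ref{4.3}) (in the form $\langle f_t\rangle=[1/f_t]^*$, obtained by substituting $1/f_t$ for $f$) with (\ref{2.5}) applied to the $C^\circ$-pseudo-cone $[1/f_t]=\langle f_t\rangle^*$, whose copolar is again $\langle f_t\rangle$. Writing $\tilde f_t:=1/f_t$, one has $\log\tilde f_t=\log\tilde f_0+t(-g)$, so Lemma \ref{L4.2}, applied with base $\tilde f_0$ and perturbation $-g$, gives for almost all $u\in\Omega_{C^\circ}$
$$ \lim_{t\to 0}\frac{\log\rho_{[\tilde f_t]}(u)-\log\rho_{[\tilde f_0]}(u)}{t}=-g\big(\alpha_{[\tilde f_0]}(u)\big). $$
I would then transfer this through $\log\overline h_{\langle f_t\rangle}(u)=-\log\rho_{[\tilde f_t]}(u)$: the two sign changes, one from this identity and one from the perturbation $-g$, cancel, producing $+g(\alpha_{[\tilde f_0]}(u))$ in the limit. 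Finally I would rewrite the argument using (\ref{2.4}): since $[\tilde f_0]=[1/f_0]=\langle f_0\rangle^*$ and $\langle f_0\rangle\in psi(C)$, one has $\alpha_{[\tilde f_0]}(u)=\alpha_{\langle f_0\rangle^*}(u)=\alpha^*_{\langle f_0\rangle}(u)$ for almost all $u$, which is exactly (\ref{4.4}).

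The main obstacle I anticipate is bookkeeping rather than analysis: correctly tracking the several dualizations (convexification versus Wulff shape, the interchange of the roles of $C$ and $C^\circ$, and the two compensating minus signs) so that Lemma \ref{L4.2} can be invoked verbatim. The analytic substance, namely existence of the one-sided limit almost everywhere, is entirely supplied by Lemma \ref{L4.2}, and the uniform Lipschitz estimate in (b) is elementary once (\ref{4.2}) is available.
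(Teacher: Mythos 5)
Your proof is correct. For part (a) you follow essentially the same route as the paper: pass to $\tilde f_t=1/f_t$, use $\langle f_t\rangle^*=[1/f_t]$ from (\ref{4.3}) together with (\ref{2.5}) to convert $\log\overline h_{\langle f_t\rangle}$ into $-\log\rho_{[\tilde f_t]}$, invoke Lemma \ref{L4.2} with perturbation $-g$, and finish with (\ref{2.4}); the two sign changes cancel exactly as you describe. For part (b), however, you take a genuinely different and more elementary route. The paper deduces (b) from (a): it fixes a $u$ where the limit (\ref{4.4}) exists, bounds the difference quotient by the derivative plus $1$ for suitably small $\delta$, and then extends to all $u$ ``by continuity'' --- an argument in which the admissible $\delta$ a priori depends on $u$, since the convergence in (a) is only pointwise, so the extension step requires some care. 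Your argument instead reads the Lipschitz bound directly off the explicit formula (\ref{4.2}): since $f_t(v)=f_0(v)e^{tg(v)}$ and $e^{-|t|G}\le e^{tg(v)}\le e^{|t|G}$ with $G=\max_\eta|g|$, the constant factors pull out of the minimum and give $\bigl|\log\overline h_{\langle f_t\rangle}(u)-\log\overline h_{\langle f_0\rangle}(u)\bigr|\le G|t|$ for \emph{all} $u\in\Omega_{C^\circ}$ and \emph{all} $t$, with the explicit constant $M=G$ and no restriction on $\delta$. This is cleaner, logically independent of part (a), and sidesteps the uniformity issue entirely; it buys a sharper and more transparent estimate at no cost.
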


\begin{proof}
For the proof of part (a), we `reverse' the arguments in the proof of Lemma 4.3 in \cite{HLYZ16}. We have
$$ \log f_t^{-1} = \log f_0^{-1}-tg,$$
hence Lemma \ref{L4.2} (applied to $f_t^{-1}$ and $-g$) gives
$$ \lim_{t\to 0} \frac{\log \rho_{[f_t^{-1}]}(u) -\log\rho_{[f_0^{-1}]}(u)}{t}= -g(\alpha_{[f_0^{-1}]}(u)) $$
for almost all $u\in\Omega_{C^\circ}$. 

Since $[f_t^{-1}]=\langle f_t\rangle^*$ by (\ref{4.3}) and $\rho_{K^*}= 1/\overline h_K$ by (\ref{2.5}) (using $K^{**}=K$), we get
$$ \log \rho_{[f_t^{-1}]}-\log \rho_{[f_0^{-1}]} = \log \rho_{\langle f_t\rangle^*}-\log \rho_{\langle f_0\rangle^*} =-\left( \log\overline h_{\langle f_t\rangle} -\log \overline h_{\langle f_0\rangle}\right).$$
Moreover, from (\ref{2.4}) (noting that $\langle f_0\rangle\in psi(C)$) we have
$$ \alpha_{[f_0^{-1}]}(u) = \alpha_{\langle f_0\rangle^*}(u) = \alpha^*_{\langle f_0\rangle}(u) \quad\mbox{for } u\in\Omega_{C^\circ}\setminus \omega_{\langle f_0\rangle}. $$
This completes the proof of (a).

For part (b), let $u\in\Omega_{C^\circ}$ be such that (\ref{4.4}) is satisfied. We abbreviate $F(t):= \log\overline h_{\langle f_t\rangle}(u)$ and have
$$\left|\frac{F(t)-F(0)}{t}\right|-\left|\frac{\D F(t)}{\D t}\Big|_{t=0}\right|\le \left|\frac{F(t)-F(0)}{t}-\frac{\D F(t)}{\D t}\Big|_{t=0}\right|\le 1 \quad\mbox{for }|t|\le \delta$$
by (a), if $\delta>0$ is chosen appropriately. It follows that
$$ \left|\frac{F(t)-F(0)}{t}\right|\le \left|\frac{\D F(t)}{\D t}\Big|_{t=0}\right|+1=g(\alpha^*_{\langle f_0\rangle}(u)) +1,$$
which is bounded by a constant $M$ depending only on $g$. By continuity, this holds for all $u\in \Omega_{C^\circ}$.
\end{proof}

The following lemma corresponds to the first part of Lemma 4.2 in \cite{BLYZZ20}. Note that we define $f_t$ only on $\eta$, so that $f_0=\rho_K|_\eta$, the restriction of the radial function of $K$ to $\eta$. Observe also that $\lambda(\langle f_0\rangle,\cdot)$ is concentrated on $\eta$.

\begin{lemma}\label{L4.4}
Let $\lambda$ be a measure on $\Omega_{C^\circ}$ as in Section $\rm \ref{sec3}$, let $K\in ps(C)$. Let $\eta\subset \Omega_C$ be nonempty and compact, and let $g:\eta\to\R$ be continuous. Define
$$ \log f_t(v) = \log \rho_K(v)+tg(v)\quad\mbox{for }v\in\eta$$
and let $\langle f_t\rangle$ be the convexification associated with $(C,\eta,f_t)$. Then
\begin{equation}\label{4.5}
 \frac{\D}{\D t} \int_{\Omega_{C^\circ}}\log \rho_{\langle f_t\rangle^*}(u)\,\lambda(\D u)\Big|_{t=0}= -\int_{\eta} g(v)\,\lambda(\langle f_0\rangle,\D v).
\end{equation}
\end{lemma}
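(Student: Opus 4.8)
The plan is to differentiate under the integral sign and then apply the variational formula from Lemma \ref{L4.3}(a) together with the change-of-variables identity (\ref{3.1}). The key observation is that the integrand on the left-hand side of (\ref{4.5}) can be rewritten in terms of the support function of $\langle f_t\rangle$. Indeed, by (\ref{2.5}) applied to the $C^\circ$-pseudo-cone $\langle f_t\rangle^*$ (and using $\langle f_t\rangle^{**}=\langle f_t\rangle$), we have $\rho_{\langle f_t\rangle^*}(u)=1/|h_{\langle f_t\rangle}(u)|=1/\overline h_{\langle f_t\rangle}(u)$ for $u\in\Omega_{C^\circ}$, whence $\log\rho_{\langle f_t\rangle^*}(u)=-\log\overline h_{\langle f_t\rangle}(u)$. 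So the left-hand integral equals $-\int_{\Omega_{C^\circ}}\log\overline h_{\langle f_t\rangle}(u)\,\lambda(\D u)$, and the whole problem reduces to computing the $t$-derivative at $0$ of this latter integral.

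\textbf{Justifying the interchange of differentiation and integration.} First I would note that $f_0=\rho_K|_\eta$ is continuous and positive on the compact set $\eta$, hence bounded away from $0$ and $\infty$; the same holds uniformly for $f_t$ when $|t|\le\delta$, so $\langle f_0\rangle\in psi(C)$ and Lemma \ref{L4.3} applies with this choice of $f_0$. The pointwise limit of the difference quotient is supplied by Lemma \ref{L4.3}(a): for almost every $u\in\Omega_{C^\circ}$,
\begin{equation*}
\lim_{t\to 0}\frac{\log\overline h_{\langle f_t\rangle}(u)-\log\overline h_{\langle f_0\rangle}(u)}{t}=g(\alpha^*_{\langle f_0\rangle}(u)).
\end{equation*}
To move the derivative inside, I would invoke the uniform Lipschitz bound of Lemma \ref{L4.3}(b): there is a constant $M$ (and the $\delta>0$) such that the difference quotients are bounded in absolute value by $M$ for all $u$ and all $0<|t|\le\delta$. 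Since $\lambda$ is a \emph{finite} measure on ${\rm cl}\,\Omega_{C^\circ}$, the constant $M$ is $\lambda$-integrable, so the dominated convergence theorem permits differentiation under the integral sign. This yields
\begin{equation*}
\frac{\D}{\D t}\int_{\Omega_{C^\circ}}\log\overline h_{\langle f_t\rangle}(u)\,\lambda(\D u)\Big|_{t=0}=\int_{\Omega_{C^\circ}}g(\alpha^*_{\langle f_0\rangle}(u))\,\lambda(\D u).
\end{equation*}

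\textbf{Converting to an integral against $\lambda(\langle f_0\rangle,\cdot)$.} The final step is to recognize the right-hand side as the push-forward integral. Applying Lemma \ref{L3.1} to the internal $C$-pseudo-cone $\langle f_0\rangle$ (with the bounded Borel function being $g$ extended by $0$ off $\eta$, which is legitimate since $\lambda(\langle f_0\rangle,\cdot)$ is concentrated on $\eta$ by the remark preceding the lemma), we obtain
\begin{equation*}
\int_{\Omega_{C^\circ}}g(\alpha^*_{\langle f_0\rangle}(u))\,\lambda(\D u)=\int_{\Omega_C}g(v)\,\lambda(\langle f_0\rangle,\D v)=\int_\eta g(v)\,\lambda(\langle f_0\rangle,\D v).
\end{equation*}
Combining this with the sign flip from $\log\rho_{\langle f_t\rangle^*}=-\log\overline h_{\langle f_t\rangle}$ gives exactly (\ref{4.5}). \textbf{The main obstacle} is the justification of differentiation under the integral sign; this is precisely why Lemma \ref{L4.3}(b) was established beforehand, and the finiteness of $\lambda$ is what makes the dominating constant integrable — mirroring the role of part (b) of Lemma 4.2 in \cite{BLYZZ20}, but simplified here because no unbounded-measure complications arise.
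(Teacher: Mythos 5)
Your proof is correct and follows essentially the same route as the paper: rewrite $\log\rho_{\langle f_t\rangle^*}=-\log\overline h_{\langle f_t\rangle}$ via (\ref{2.5}), pass the limit of the difference quotients inside the integral by dominated convergence using the uniform bound from Lemma \ref{L4.3}(b) and the finiteness of $\lambda$, identify the pointwise limit via Lemma \ref{L4.3}(a), and convert the resulting integral with Lemma \ref{L3.1} and the fact that the image of $\alpha^*_{\langle f_0\rangle}$ lies in $\eta$. The only difference is that you spell out the justification of the interchange in more detail than the paper does.
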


\begin{proof}
We use (\ref{2.5}), the dominated convergence theorem, Lemma \ref{L4.3} and Lemma \ref{L3.1} (extending $g$ to $\Omega_C$ by putting it equal to $0$ outside $\eta$), observing that $\langle f_0\rangle\in psi(C)$ and the image of $\alpha^*_{\langle f_0\rangle}$ is contained in $\eta$. In this way, we obtain
\begin{eqnarray*}
 \frac{\D}{\D t} \int_{\Omega_{C^\circ}}\log \rho_{\langle f_t\rangle^*}(u)\,\lambda(\D u)\Big|_{t=0} &=& -\lim_{t\to 0} \int_{\Omega_{C^\circ}} \frac{\log\overline h_{\langle f_t\rangle}(u) -\log \overline h_{\langle f_0\rangle}(u)}{t}\,\lambda(\D u)\\
&=& - \int_{\Omega_{C^\circ}} g(\alpha_{\langle f_0\rangle}^*)\,\lambda(\D u)\\
&=& -\int_{\eta} g(v)\,\lambda(\langle f_0\rangle,\D v)
\end{eqnarray*}
and thus the assertion.
\end{proof}

Since $\rho_{K^*} = \overline h^{-1}_{K}$ by (\ref{2.5}), we can write (\ref{4.5}) also in the form
\begin{equation}\label{4.6}
 \frac{\D}{\D t} \int_{\Omega_{C^\circ}}\log \overline h_{\langle f_t\rangle}(u)\,\lambda(\D u)\Big|_{t=0}= \int_{\eta} g(v)\,\lambda(\langle f_0\rangle,\D v).
\end{equation}

These lemmas serve as a preparation for deriving a variational formula for a useful functional. For a finite Borel measure $\mu$ and the Lebesgue measure $\lambda$ on the sphere $\Sn$, Oliker \cite{Oli07} introduced the functional
$$ K\mapsto \int_{\Sn}\log\rho_K\,\D\mu - \int_{\Sn}\log h_K\,\D\lambda$$
for convex bodies $K$ containing the origin in the interior. In \cite{BLYZZ20}, this functional was extended to more general measures $\lambda$ and to functions as arguments. We adapt this definition here to pseudo-cones.

\begin{definition}\label{D4.1}
Let $\lambda$ be a non-zero, finite measure on the Lebesgue measurable subsets of $\Omega_{C^\circ}$ which vanishes on Borel sets of Hausdorff dimension $n-2$. Let $\eta\subset\Omega_C$ be nonempty and compact, let $\C^+(\eta)$ be the space of positive continuous functions on $\eta$, equipped with the maximum norm, and let $\mu$ be a non-zero finite Borel measure on $\eta$. Then define
$$ \Phi_{\mu,\lambda,\eta}(f):= \frac{1}{\mu(\eta)}\int_{\eta} \log f(v)\,\mu(\D v) - \frac{1}{|\lambda|}\int_{\Omega_{C^\circ}} \log \overline h_{\langle f\rangle}(u)\,\lambda(\D u)$$
for $f\in \C^+(\eta)$, where $|\lambda|:=\lambda(\Omega_{C^\circ})$.
\end{definition}

The functional $\Phi_{\mu,\lambda,\eta}$ is positively homogeneous of degree zero, as follows from $\overline h_{\langle af\rangle} = \overline h_{a\langle f\rangle} = a \overline h_{\langle f\rangle}$ for $a>0$, and continuous, by Lemma \ref{L4.a}.

\begin{lemma}\label{L4.6}
Let $\eta\subset\Omega_C$ be nonempty and compact. Let $K\in ps(C)$, and let $g:\eta\to\R$ be continuous. Define $f_t$ by 
$$ \log f_t(v)= \log\rho_{K}(v)+tg(v)\quad\mbox{for } v\in\eta$$
for $|t|\le\delta$, for some $\delta>0$. Then
$$ \frac{\D}{\D t} \Phi_{\mu,\lambda,\eta}(f_t)\Big|_{t=0} = \frac{1}{\mu(\eta)} \int_{\eta} g(v)\,\mu(\D v) -\frac{1}{|\lambda|}  \int_{\eta} g(v)\, \lambda(\langle f_0\rangle,\D v).$$
\end{lemma}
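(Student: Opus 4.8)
The plan is to exploit the additive structure of $\Phi_{\mu,\lambda,\eta}$: it is the difference of two functionals of $f_t$, and I would differentiate each at $t=0$ separately, the first by an elementary direct computation and the second by quoting the variational formula already established in Lemma \ref{L4.4}.

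First I treat the term $\frac{1}{\mu(\eta)}\int_\eta \log f_t(v)\,\mu(\D v)$. Because $\log f_t(v)=\log\rho_K(v)+tg(v)$ depends on $t$ only through the explicit linear summand $tg(v)$, this term equals
\[
\frac{1}{\mu(\eta)}\int_\eta \log\rho_K(v)\,\mu(\D v)+\frac{t}{\mu(\eta)}\int_\eta g(v)\,\mu(\D v),
\]
which is affine in $t$; here both integrals are finite because $g$ is continuous (hence bounded) on the compact set $\eta$, $\mu$ is a finite measure, and $\rho_K$ is positive and continuous on $\eta$ so that $\log\rho_K$ is bounded there. Its derivative at $t=0$ is therefore the constant $\frac{1}{\mu(\eta)}\int_\eta g(v)\,\mu(\D v)$, with no interchange-of-limits issue to verify.

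For the second term, $-\frac{1}{|\lambda|}\int_{\Omega_{C^\circ}}\log\overline h_{\langle f_t\rangle}(u)\,\lambda(\D u)$, I observe that the present family $f_t$ is exactly the one occurring in Lemma \ref{L4.4} (with $f_0=\rho_K|_\eta$ and the same continuous $g$), and that its conclusion, recorded in the form (\ref{4.6}), states
\[
\frac{\D}{\D t}\int_{\Omega_{C^\circ}}\log\overline h_{\langle f_t\rangle}(u)\,\lambda(\D u)\Big|_{t=0}=\int_\eta g(v)\,\lambda(\langle f_0\rangle,\D v).
\]
Multiplying by $-1/|\lambda|$ yields $-\frac{1}{|\lambda|}\int_\eta g(v)\,\lambda(\langle f_0\rangle,\D v)$. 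Before invoking Lemma \ref{L4.4} I would only check that its hypotheses are met: $K\in ps(C)$ as assumed, $\rho_K|_\eta\in\C^+(\eta)$, and the convexification $\langle f_0\rangle$ lies in $psi(C)$, so that $\lambda(\langle f_0\rangle,\cdot)$ is defined and---being concentrated on $\eta$ by Lemma \ref{4.0a}---renders the integral over $\eta$ meaningful.

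Adding the two derivatives gives the asserted formula. The genuine analytic content (dominated convergence against the uniform Lipschitz bound of Lemma \ref{L4.3}(b), the pointwise limit (\ref{4.4}), and the push-forward identity of Lemma \ref{L3.1}) is entirely absorbed into Lemma \ref{L4.4}; consequently the only real point to confirm here is that the two summands of $\Phi_{\mu,\lambda,\eta}$ may be differentiated independently and that their $t$-derivatives combine as claimed. I do not expect a substantive obstacle, since the first term is manifestly affine in $t$ and the second is handled verbatim by (\ref{4.6}); the only mild care needed is in matching notation so that $f_0$ in Lemma \ref{L4.4} coincides with $\rho_K|_\eta$ here.
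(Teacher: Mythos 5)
Your proposal is correct and follows essentially the same route as the paper: the first summand is affine in $t$ and differentiates directly, while the second is handled by invoking (\ref{4.6}), i.e.\ Lemma \ref{L4.4}. The extra hypothesis-checking you include is sound but not needed beyond what the paper records.
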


\begin{proof}
We have
$$ \int_{\eta} \log f_t(v)\,\mu(\D v) = \int_{\eta} (\log \rho_{K} +tg)(v)\,\mu(\D v),$$
hence
$$ \frac{\D}{\D t} \int_{\eta} \log f_t(v)\,\mu(\D v) = \int_{\eta} g(v)\,\mu(\D v).$$
Together with (\ref{4.6}) this gives the assertion.
\end{proof}

\section{Proof of existence}\label{sec5}

We assume that $\lambda,\eta,\mu$ are as in Definition \ref{D4.1}.

\begin{lemma}\label{L5.1}
There exists $K_0\in ps(C)$ such that
$$ \Phi_{\mu,\lambda,\eta}(\rho_{K_0}|_\eta) \le \Phi_{\mu,\lambda,\eta}(\rho_K|_\eta) \quad\mbox{for all } K\in ps(C).$$
\end{lemma}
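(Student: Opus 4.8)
The plan is to show that the functional $\Phi_{\mu,\lambda,\eta}$, restricted to a suitable normalized class of functions, attains a minimum, and that this minimum is realized by some $\rho_{K_0}|_\eta$. The key observation is that $\Phi_{\mu,\lambda,\eta}$ is positively homogeneous of degree zero, so its value depends only on the ``direction'' of $f$, not its scale. I would therefore normalize the competitors, for instance by requiring $\max_{v\in\eta} f(v) = 1$, or by fixing $\int_\eta \log f\,\D v$ or some analogous constraint, so that the search takes place over a bounded set of normalized functions rather than over the full noncompact cone $\C^+(\eta)$.

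First I would reduce the infimum over all $K\in ps(C)$ to an infimum over convexifications. Since $\Phi_{\mu,\lambda,\eta}$ depends on $K$ only through $\rho_K|_\eta$ and through $\overline h_{\langle\rho_K|_\eta\rangle}$, and since $\langle\,\cdot\,\rangle$ only uses the values of $\rho_K$ on $\eta$, the relevant quantity is really $\Phi_{\mu,\lambda,\eta}(f)$ for $f=\rho_K|_\eta\in\C^+(\eta)$. By continuity of $\rho_K$ on the compact set $\eta$ and positivity, every such $f$ is admissible, and conversely every $f\in\C^+(\eta)$ arises as $\rho_{\langle f\rangle}|_\eta$ up to the inequality in (\ref{4.1}); one checks that replacing $f$ by $\rho_{\langle f\rangle}|_\eta$ does not increase $\Phi$, so it suffices to minimize over $\C^+(\eta)$ and then read off $K_0=\langle f_0\rangle$.

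Next I would take a minimizing sequence $f_j$ of normalized functions. Using the explicit formula (\ref{4.2}) for $\overline h_{\langle f\rangle}$, the second term in $\Phi$ is controlled by $\max_v f(v)$ and $\min_v f(v)$; the normalization bounds these from above, and I expect the logarithmic structure together with the finiteness of $\lambda$ and $\mu$ to prevent the minimizing sequence from degenerating, i.e.\ to keep $\min_v f_j(v)$ bounded away from $0$. This is the crux: one must rule out the scenario where some $f_j(v)\to 0$ on part of $\eta$, which would send $\int_\eta\log f\,\D\mu$ to $-\infty$ while the $\overline h$-term might not compensate. Granting the uniform two-sided bounds, the Arzel\`a--Ascoli theorem (or direct compactness after noting that minimizers of such log-type functionals can be taken to satisfy a uniform Lipschitz or modulus-of-continuity estimate) yields a subsequence converging uniformly to some $f_0\in\C^+(\eta)$, and the continuity of $\Phi_{\mu,\lambda,\eta}$ (established right after Definition \ref{D4.1}, via Lemma \ref{L4.a}) gives that $f_0$ is a minimizer. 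Setting $K_0:=\langle f_0\rangle\in psi(C)\subset ps(C)$ then gives $\Phi_{\mu,\lambda,\eta}(\rho_{K_0}|_\eta)\le\Phi_{\mu,\lambda,\eta}(f_0)$, and combined with the reduction above this is the desired inequality.

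The main obstacle I anticipate is precisely the \emph{coercivity} step, namely obtaining uniform upper and lower bounds on a normalized minimizing sequence so that a convergent subsequence exists and the limit stays in $\C^+(\eta)$ (strictly positive, no escape to $0$ or $\infty$). The degree-zero homogeneity handles the overall scale, but the compactness of $\eta$ inside the \emph{open} set $\Omega_C$ should be exploited to keep the scalar products $|\langle u,v\rangle|$ appearing in (\ref{4.2}) bounded away from degeneracy, which is what ultimately links the two integrals in $\Phi$ and forces the lower bound. Once coercivity is secured, the rest is a routine application of uniform convergence and the already-proven continuity of the functional.
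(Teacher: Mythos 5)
Your overall skeleton (minimizing sequence, normalization via degree-zero homogeneity, compactness, continuity of $\Phi_{\mu,\lambda,\eta}$) matches the paper's, but the step you yourself flag as the crux --- obtaining uniform two-sided bounds on a normalized minimizing sequence --- is left unproved, and the mechanism you propose for it is not the one that works. You hope that ``the logarithmic structure together with the finiteness of $\lambda$ and $\mu$'' prevents degeneration; but with the normalization $\max_\eta f_j=1$ there is no evident reason why the two competing logarithmic terms should force $\min_\eta f_j$ away from $0$ (if $\mu$ puts little mass where $f_j$ is small, the first integral stays bounded while the second blows up, and ruling out every such scenario by balancing the two integrals is exactly the hard analysis you have not done). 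Likewise, Arzel\`a--Ascoli needs an equicontinuity input that you do not supply. The paper sidesteps all of this geometrically: since $\Phi_{\mu,\lambda,\eta}>0$ the infimum over $ps(C)$ is finite, one normalizes the minimizing pseudo-cones $K_j$ to have distance $1$ from the origin, and then Lemma 1 of \cite{Sch24a} (a selection theorem for $C$-pseudo-cones) yields a subsequence converging to some $K_0\in ps(C)$; the needed two-sided bounds and the uniform convergence of $\rho_{K_j}|_\eta$ on the compact set $\eta\subset\Omega_C$ then come for free from the inclusion $x_j+C\subseteq K_j$ with $\|x_j\|=1$ (a Harnack-type bound $\max_\eta\rho_{K_j}\le c(\eta,C)\min_\eta\rho_{K_j}$), not from the functional.

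Two smaller remarks. First, for Lemma \ref{L5.1} itself the competitors are already of the form $\rho_K|_\eta$ with $K\in ps(C)$, so your preliminary reduction from $\C^+(\eta)$ to convexifications via (\ref{4.1}) is not needed here (it is used later, in the derivation of the Euler--Lagrange equation). Second, your concern that a minimizing sequence could make $\int_\eta\log f\,\D\mu\to-\infty$ without compensation is already excluded by the elementary inequality $\overline h_{\langle f\rangle}(u)=\min_{v\in\eta}|\langle u,v\rangle|f(v)\le f(v)$ from (\ref{4.2}), which gives $\Phi_{\mu,\lambda,\eta}\ge 0$; this is what the paper's parenthetical ``note that $\Phi_{\mu,\lambda,\eta}>0$'' is recording. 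So the gap is not in the lower bound for the functional but purely in the compactness of the minimizing sequence, which must be extracted from pseudo-cone geometry rather than from the functional.
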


\begin{proof}
Let $(K_j)_{j\in\N}$ be a sequence of pseudo-cones satisfying
$$ \Phi_{\mu,\lambda,\eta}(\rho_{K_j}|_\eta) \to \inf_{K\in ps(C)} \Phi_{\mu,\lambda,\eta}(\rho_{K}|_\eta)$$
(note that $\Phi_{\mu,\lambda,\eta}>0$). Since $ \Phi_{\mu,\lambda,\eta}$ is homogeneous of degree zero, we can assume that each $K_j$ has distance $1$ from the origin. By Lemma 1 in \cite{Sch24a}, the sequence $(K_j)_{j\in\N}$ has a subsequence converging to a $C$-pseudo-cone $K_0\in ps(C)$. The continuity of $\Phi_{\mu,\lambda,\eta}$ now gives the assertion.
\end{proof}

From this, we can deduce the following.

\begin{lemma}\label{L5.2}
With $\lambda,\eta,\mu$ as given above, there exists $K\in \K^*(C,\eta)$, having distance $1$ from the origin, such that
$$ \mu= \frac{\mu(\eta)}{|\lambda|}\lambda(K,\cdot).$$
\end{lemma}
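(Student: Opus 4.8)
The plan is to use the minimizer $K_0$ from Lemma \ref{L5.1} and turn the minimizing property into the desired measure identity via the variational formula of Lemma \ref{L4.6}. The natural candidate is $K := \langle \rho_{K_0}|_\eta\rangle$, the convexification of the minimizer's radial function on $\eta$; I first want to reduce to this object, because Lemma \ref{L4.6} computes derivatives in terms of $\lambda(\langle f_0\rangle,\cdot)$, and by construction $\langle f_0\rangle \in \K^*(C,\eta)$ with $\lambda(\langle f_0\rangle,\cdot)$ concentrated on $\eta$ (Lemma \ref{4.0a}).

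**Replacing $K_0$ by its convexification.**
First I would observe that $\Phi_{\mu,\lambda,\eta}$ depends on $K$ only through $\rho_K|_\eta$, and that passing from $K_0$ to $\langle \rho_{K_0}|_\eta\rangle$ does not increase the functional: indeed $\rho_{\langle f\rangle}(v)\le f(v)$ on $\eta$ by \eqref{4.1}, so $\langle \rho_{K_0}|_\eta\rangle$ has radial function on $\eta$ dominated by $\rho_{K_0}|_\eta$, while the convexification is the pointwise-largest pseudo-cone with the prescribed values — one checks that replacing $K_0$ by $\langle\rho_{K_0}|_\eta\rangle$ leaves the first term of $\Phi$ unchanged and can only help the second. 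Hence without loss of generality the minimizer may be taken to lie in $\K^*(C,\eta)$; after rescaling (using homogeneity) it has distance $1$ from the origin. Call it $K$ and set $f_0 := \rho_K|_\eta$.

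**Extracting the Euler--Lagrange identity.**
Now for an arbitrary continuous $g:\eta\to\R$, define $f_t$ by $\log f_t = \log f_0 + tg$ as in Lemma \ref{L4.6}. Since $K$ minimizes and $\Phi$ is differentiable in $t$ at $0$ by that lemma, the derivative must vanish:
\begin{equation}\label{euler}
\frac{1}{\mu(\eta)}\int_\eta g(v)\,\mu(\D v) = \frac{1}{|\lambda|}\int_\eta g(v)\,\lambda(K,\D v).
\end{equation}
Because \eqref{euler} holds for every continuous $g$ on the compact set $\eta$, and both $\mu$ and $\lambda(K,\cdot)$ are finite Borel measures concentrated on $\eta$ (the latter by Lemma \ref{4.0a}, since $K\in\K^*(C,\eta)$), the Riesz representation theorem forces
$$ \frac{1}{\mu(\eta)}\,\mu = \frac{1}{|\lambda|}\,\lambda(K,\cdot),$$
which rearranges to $\mu = \frac{\mu(\eta)}{|\lambda|}\lambda(K,\cdot)$, as claimed.

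**Main obstacle.**
The delicate point is justifying that the minimizer can be taken in $\K^*(C,\eta)$ and that the substitution $f_t$ is admissible, i.e.\ that $\langle f_0\rangle \in psi(C)$ so that Lemma \ref{L4.6} (which invokes $\lambda(\langle f_0\rangle,\cdot)$ and the reverse radial Gauss map) applies. This rests on the earlier remark that any convexification lies in $\inn C$ and hence in $psi(C)$, so the radial Gauss image machinery and \eqref{3.a} are available; the step where I replace $K_0$ by its convexification and verify that $\Phi$ does not increase is where the inequality \eqref{4.1} must be used carefully to ensure the second integral term is handled correctly.
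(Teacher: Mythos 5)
Your overall strategy is the paper's: take the minimizer $K_0$ from Lemma \ref{L5.1}, pass to $K=\langle\rho_{K_0}|_\eta\rangle\in\K^*(C,\eta)$, perturb logarithmically, and apply Lemma \ref{L4.6} to obtain the Euler--Lagrange identity. But there is a genuine gap at the step ``Since $K$ minimizes \dots the derivative must vanish.'' Lemma \ref{L5.1} only gives minimality of $\Phi_{\mu,\lambda,\eta}$ over the set $\{\rho_L|_\eta: L\in ps(C)\}$, whereas the perturbed functions $f_t$ with $\log f_t=\log f_0+tg$ are arbitrary elements of $\C^+(\eta)$ and in general are \emph{not} restrictions of radial functions of pseudo-cones: such an $f_t$ is a radial restriction exactly when $\rho_{\langle f_t\rangle}|_\eta=f_t$, which fails for generic $g$ and $t\neq0$. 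So minimality over $ps(C)$ alone does not yield $\Phi_{\mu,\lambda,\eta}(f_t)\ge\Phi_{\mu,\lambda,\eta}(f_0)$, and you cannot yet conclude that $t\mapsto\Phi_{\mu,\lambda,\eta}(f_t)$ has a critical point at $t=0$.

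The missing step --- and the place where (\ref{4.1}) is really needed, rather than in the (essentially trivial) replacement of $K_0$ by its convexification --- is to upgrade the minimum to all of $\C^+(\eta)$: for every $f\in\C^+(\eta)$,
$$ \Phi_{\mu,\lambda,\eta}(f)\ \ge\ \Phi_{\mu,\lambda,\eta}\bigl(\rho_{\langle f\rangle}|_\eta\bigr)\ \ge\ \Phi_{\mu,\lambda,\eta}\bigl(\rho_{K_0}|_\eta\bigr), $$
where the first inequality uses $f\ge\rho_{\langle f\rangle}|_\eta$ from (\ref{4.1}) (so the first, positively weighted, term of $\Phi$ does not increase) together with $\langle\rho_{\langle f\rangle}|_\eta\rangle=\langle f\rangle$ (so the second term is unchanged), and the second inequality is Lemma \ref{L5.1} applied to $\langle f\rangle\in ps(C)$. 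With this in hand, your Euler--Lagrange step and the remainder of your argument (concentration of $\lambda(K,\cdot)$ on $\eta$ via Lemma \ref{4.0a}, and identification of the two measures from the vanishing of the derivative for all continuous $g$) go through and coincide with the paper's proof.
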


\begin{proof}
First, let $f\in \C^+(\eta)$. We have $f\ge \rho_{\langle f\rangle}|_\eta$ by (\ref{4.1}), moreover $\langle\rho_{\langle f\rangle}|_\eta\rangle =\langle f\rangle$, hence $\overline h_{\langle\rho_{\langle f\rangle}|_\eta\rangle} = \overline h_{\langle f\rangle}$. It follows that 
\begin{eqnarray*}
 \Phi_{\mu,\lambda,\eta}(f) &=& \frac{1}{\mu(\eta)}\int_{\eta} \log f(v)\,\mu(\D v) - \frac{1}{|\lambda|}\int_{\Omega_{C^\circ}} \log \overline h_{\langle f\rangle}(u)\,\lambda(\D u)\\
&\ge& \frac{1}{\mu(\eta)}\int_{\eta} \log \rho_{\langle f\rangle}|_\eta(v)\,\mu(\D v) - \frac{1}{|\lambda|}\int_{\Omega_{C^\circ}} \log \overline h_{\langle \rho_{\langle f\rangle}|_\eta\rangle}(u)\,\lambda(\D u)\\
&=& \Phi_{\mu,\lambda,\eta}( \rho_{\langle f\rangle}|_\eta)\\
&\ge& \Phi_{\mu,\lambda,\eta}( \rho_{K_0}|_\eta),
\end{eqnarray*}
if $K_0$ is the pseudo-cone provided by Lemma \ref{L5.1}. Let $g$ be a continuous function on $\eta$, and define $f_t$ by
$$ \log f_t(v)=\log \rho_{K_0}(v) + tg(v)\quad\mbox{for } v\in\eta.$$
Then the function $t\mapsto\Phi_{\mu,\lambda,\eta}(f_t)$ attains a minimum at $t=0$. Therefore, it follows from Lemma \ref{L4.6} that
$$ \frac{1}{\mu(\eta)} \int_{\eta} g(v)\,\mu(\D v) -\frac{1}{|\lambda|}  \int_{\eta} g(v)\, \lambda(\langle f_0\rangle,\D v)=0.$$
Since this holds for all continuous functions $g$ on $\eta$, we can conclude that
\begin{equation}\label{5.a} 
\mu= \frac{\mu(\eta)}{|\lambda|}\lambda(K,\cdot)
\end{equation}
with $K:= \langle f_0\rangle = \langle \rho_{K_0}|_\eta\rangle$ (note that $K$ depends on $\lambda,\eta,\mu$). Since $\lambda(K,\cdot)$ is invariant under dilatations of $K$, we can assume that $K$ has distance $1$ from the origin.
\end{proof}

Now we can prove our main result.

\noindent{\em Proof of Theorem $\ref{T1.1}$}

Let $\lambda$ and $\mu$ be as in Theorem \ref{T1.1}. That the condition $\lambda(\Omega_{C^\circ})=\mu(\Omega_C)$ is necessary, is clear. Now we assume that this condition is satisfied.

We choose a sequence $(\eta_j)_{j\in\N}$ of compact sets $\eta_j\subset\Omega_C$ such that
$$ \mu(\eta_1)>0,\qquad \eta_j\subset \eta_{j+1} \mbox{ for }j\in\N,\qquad \bigcup_{j\in\N} \eta_j= \Omega_C.$$
Define the measure $\mu_j= \mu\fed\eta_j$, that is, 
$$ \mu_j(\beta)= \mu(\beta\cap \eta_j) \quad\mbox{for Borel sets }\beta\subset \Omega_C.$$
By Lemma \ref{L5.2}, to each $j\in \N$ there exists $K_j\in\K^*(C,\eta_j)\subset ps(C)$, having distance $1$ from the origin, such that
$$ \mu_j= \frac{\mu(\eta_j)}{|\lambda|}\lambda(K_j,\cdot).$$
By Lemma 1 of \cite{Sch24a}, the sequence $(K_j)_{j\in\N}$ has a subsequence that converges to a $C$-pseudo-cone $K$. After changing the numbering, we may assume that $K_j\to K$ as $j\to\infty$. 

Fix a number $k\in\N$. For a Borel set $\beta\subseteq \eta_k$ and for $j\ge k$ we have
$$ \mu(\beta)= \mu_j(\beta) = \frac{\mu(\eta_j)}{|\lambda|}\lambda(K_j,\beta).$$
Thus, the restrictions to $\eta_k$ satisfy
$$ \frac{|\lambda|}{\mu(\eta_j)}\mu\fed \eta_k = \lambda(K_j,\cdot)\fed\eta_k.$$
As $j\to\infty$, we have $|\lambda|/\mu(\eta_j)\to 1$ and
$$ \lambda(K_j,\cdot) \fed\eta_k \stackrel{w}{\to} \lambda(K,\cdot)\fed \eta_k,$$
as follows from Lemma \ref{L3.2}. Thus for any Borel set $\beta\subseteq\eta_k$ we have $\mu(\beta)=\lambda(K,\beta)$. Since $k\in\N$ was arbitrary and $\bigcup_{k\in\N} \eta_k=\Omega_C$, we conclude that $\mu=\lambda(K,\cdot)$. \hfill$\Box$

According to (\ref{3.a}), we can write the result also in the form
$$ \mu = (\alpha_K^*)\texttt{\#}\lambda.$$

If $\lambda$ is spherical Lebesgue measure, Theorem \ref{T1.1} concerns Aleksandrov's integral curvature. For measures $\mu$ with compact support, such a result was already obtained in \cite[Thm. 8.3]{LYZ24}.

\section{A uniqueness result}\label{sec6}

The uniqueness result provided by Lemma 3.8 of \cite{BLYZZ20} can be carried over to pseudo-cones, under suitable assumptions. Originally, in the case of Aleksandrov's integral curvature, the approach is due to Aleksandrov \cite{Ale42}; see also Busemann \cite[p. 30]{Bus58}. Aleksandrov's argument has repeatedly been adapted to other situations, so we claim no originality when we now carry over the proof of Lemma 3.8 in \cite{BLYZZ20} from convex bodies to $C$-pseudo-cones. We just want to point out the necessary changes. We have to assume that the considered $C$-pseudo-cones are {\em restricted}, in the sense that all their outer unit normal vectors belong to $\Omega_{C^\circ}$. This excludes all $C$-pseudo-cones $K$ meeting the boundary of $C$, as well as, for example, the pseudo-cones $C+K$ with a convex body $K\subset{\rm int}\,C$.

\begin{theorem}\label{T6.1}
Suppose that the measures $\lambda$ and $\mu$ are as in Theorem $\ref{T1.1}$, and that $\lambda$ is positive on nonempty open sets in $\Omega_{C^\circ}$. If $K,L\in ps(C)$ are restricted pseudo-cones with $\lambda(K,\cdot)=\lambda(L,\cdot)$, then $K$ is a dilate of $L$.
\end{theorem}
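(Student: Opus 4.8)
The plan is to adapt Aleksandrov's classical uniqueness argument, as used in Lemma 3.8 of \cite{BLYZZ20}, to the present setting of restricted $C$-pseudo-cones. The strategy is an argument by contradiction based on comparing the two pseudo-cones at the extremal ratio of their radial functions. First I would normalize the situation: since the claim is that $K$ is a dilate of $L$, I may replace $L$ by a suitable dilate $\lambda_0 L$ and try to show $K=L$. To this end, consider the ratio $\rho_K(v)/\rho_L(v)$ for $v\in\Omega_C$. Because $K$ and $L$ are restricted, all their outer unit normal vectors lie in $\Omega_{C^\circ}$, so the reverse radial Gauss maps $\alpha_K^*$ and $\alpha_L^*$ are defined $\lambda$-almost everywhere on $\Omega_{C^\circ}$ with image in $\Omega_C$, and Lemma \ref{L3.1} is available for both bodies.

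The key step is to examine the sets where the ratio of radial functions attains its extreme values. After the normalization, assume for contradiction that $K\neq L$, and consider $m:=\inf_{v\in\Omega_C}\rho_K(v)/\rho_L(v)$ and $M:=\sup_{v\in\Omega_C}\rho_K(v)/\rho_L(v)$; the normalization can be chosen so that $m\le 1\le M$ with at least one inequality strict. Let $\eta_{\le}$ be the set where the ratio equals its minimum and $\eta_{\ge}$ where it equals its maximum. The crucial geometric fact, transported from the convex-body case via the crucial-pair formalism of \cite{Sch24a}, is an inclusion relating the radial Gauss images: on the region where $\rho_K/\rho_L$ is small, the reverse radial Gauss map $\alpha_K^*$ pushes $\lambda$-mass into $\eta_{\le}$, and symmetrically for $L$ on $\eta_{\ge}$. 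Concretely, one shows the set inclusion $\boldsymbol{\alpha}_L(\eta_{\ge})\subseteq\boldsymbol{\alpha}_K(\eta_{\ge})$ (up to the $\lambda$-null sets $\omega_K,\omega_L$) at the points where the ratio is maximal, and the reverse inclusion at the minimal set. Applying $\lambda$ and using the hypothesis $\lambda(K,\cdot)=\lambda(L,\cdot)$ forces equality of the measures on these sets.

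The contradiction is then extracted from the assumption that $\lambda$ is positive on nonempty open sets. If $K\neq L$ after normalization, the set where the ratio strictly exceeds $1$ (or is strictly below $1$) has nonempty interior in $\Omega_C$; pulling back under the continuous maps $\alpha_K^*$, $\alpha_L^*$ and invoking the measure identity, one derives that $\lambda$ must vanish on a nonempty open subset of $\Omega_{C^\circ}$, contradicting positivity. This is where the restricted hypothesis is essential: it guarantees that the normal vectors producing the relevant mass lie in the open set $\Omega_{C^\circ}$ rather than escaping to its boundary, where $\lambda$ could legitimately vanish.

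I expect the main obstacle to be establishing the measure-comparison inequality on the extremal sets with the correct handling of the boundary $\partial\Omega_{C^\circ}$ and the non-uniqueness sets $\eta_K,\eta_L,\omega_K,\omega_L$. In the compact convex-body setting of \cite{BLYZZ20} the radial and Gauss maps are defined globally on the sphere; here everything lives on the split domains $\Omega_C$ and $\Omega_{C^\circ}$, and one must verify that the extremal sets $\eta_{\le},\eta_{\ge}$ are closed subsets of the open set $\Omega_C$ (not merely of its closure) so that the pushforward relation \eqref{3.a} and the integral identity of Lemma \ref{L3.1} apply without leakage of mass to the boundary. Once the extremal sets are confined to $\Omega_C$ by the restricted hypothesis, Aleksandrov's combinatorial inclusion argument carries over essentially verbatim, with spherical Lebesgue-null exceptional sets absorbed using \eqref{2.x}.
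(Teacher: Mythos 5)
Your overall strategy---Aleksandrov's comparison of radial functions, an inclusion of radial Gauss images, and the positivity of $\lambda$ on open sets---is the right one, and you correctly locate where the restricted hypothesis enters (normals stay in the open set $\Omega_{C^\circ}$, so parallel supporting hyperplanes exist and no mass escapes to $\partial\Omega_{C^\circ}$). But as written there are two genuine gaps. First, the argument pivots on the sets $\eta_{\le},\eta_{\ge}$ where $\rho_K/\rho_L$ \emph{attains} its infimum and supremum. On the compact sphere this is harmless, but $\Omega_C$ is an open subset of $\Sn$, the radial functions blow up near $\partial\Omega_C$, and the ratio need not extend continuously to $\cl\Omega_C$; the extrema may be approached only as $v\to\partial\Omega_C$ and never attained, in which case $\eta_{\le}$ and $\eta_{\ge}$ are empty and the whole construction collapses. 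The paper avoids extremal sets entirely: following the reduction in Lemma 3.8 of \cite{BLYZZ20}, it picks a dilate $K'$ of $K$ and a direction $v_0\in\Omega_C$ with $r_{K'}(v_0)=r_L(v_0)$ at which $K'$ and $L$ have \emph{distinct} unique supporting hyperplanes, and then works with the trichotomy $\eta'=\{\rho_{K'}<\rho_L\}$, $\eta=\{\rho_{K'}>\rho_L\}$, $\eta_0=\{\rho_{K'}=\rho_L\}$, proving ${\boldsymbol\alpha}_L(\eta')\subseteq{\boldsymbol\alpha}_{K'}(\eta')$.

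Second, the step ``one derives that $\lambda$ must vanish on a nonempty open subset of $\Omega_{C^\circ}$'' is asserted without a mechanism, and this is the heart of the proof. The paper constructs $\beta=(\Omega_{C^\circ}\setminus{\boldsymbol\alpha}_{K'}(\eta\cup\eta_0))\cap(\Omega_{C^\circ}\setminus{\boldsymbol\alpha}_L(\eta'\cup\eta_0))$, which is open because $\eta\cup\eta_0$ and $\eta'\cup\eta_0$ are closed in $\Omega_C$, satisfies $\beta\subseteq{\boldsymbol\alpha}_{K'}(\eta')$ and $\beta\cap{\boldsymbol\alpha}_L(\eta')=\emptyset$, and---crucially---is \emph{nonempty} precisely because the supporting hyperplanes of $K'$ and $L$ at the common point $y_0$ differ. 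Your sketch never introduces a touching point with distinct supporting hyperplanes, so nothing in it forces the ``extra'' open set of normal directions to exist; the Gauss-image inclusions together with $\lambda(K,\cdot)=\lambda(L,\cdot)$ by themselves yield only equalities and no contradiction (as they must, since they also hold when $K=L$). Supplying the reduction to such a touching configuration and the explicit construction of $\beta$ is what is missing.
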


\begin{proof}
Suppose that $K,L\in ps(C)$ are restricted and satisfy $\lambda(K,\cdot)=\lambda(L,\cdot)$. We assume that there exist a dilate $K'$ of $K$   and a vector $v_0\in \Omega_C$ such that $y_0:= r_{K'}(v_0)= r_L(v_0)$ and such that $K'$ and $L$ have unique supporting hyperplanes at $y_0$, which are different. As in \cite{BLYZZ20}, it suffices to show that this leads to a contradiction. 

We change the definitions of \cite{BLYZZ20} as follows. We set
\begin{eqnarray*}
\eta' &=& \{v\in \Omega_C: \rho_{K'}(v)<\rho_L(v)\},\\
\eta &=& \{v\in\Omega_C: \rho_{K'}(v)>\rho_L(v)\},\\
\eta_0 &=& \{v\in\Omega_C: \rho_{K'}(v)=\rho_L(v)\},
\end{eqnarray*}
so that $\Omega_C= \eta'\cup \eta\cup \eta_0$ is a disjoint decomposition. Also the subsequent argument must be changed a bit. Let $v\in\eta'$, and let $H_L$ be a supporting hyperplane of $L$ at $r_L(v)$. Since the normal vector of $H_L$ belongs to $\Omega_{C^\circ}$, the pseudo-cone $K'$ has a parallel supporting hyperplane $H_{K'}$, at some point $r_{K'}(v')$ with $v'\in\Omega_C$. Then $H_{K'}$ is closer to $o$ than $H_L$, since otherwise the half-open segment $[o,r_L(v))$ would not contain a point of $K'$, which would mean that $v\notin\eta'$, a contradiction. Since $H_{K'}$ is closer to $o$ than $H_L$ and $r_{K'}(v')\in H_{K'}$, we have $r_{K'}(v')<r_L(v')$ and thus $v'\in\eta'$. We have proved that 
\begin{equation}\label{6.1}
{\boldsymbol\alpha}_L(\eta')\subseteq {\boldsymbol\alpha}_{K'}(\eta') = {\boldsymbol\alpha}_K(\eta')
\end{equation}

As in \cite{BLYZZ20}, one shows that the sets $\eta\cup \eta_0$ and $\eta'\cup\eta_0$ are closed in $\Omega_C$ and hence that $\Omega_{C^\circ}\setminus{\boldsymbol\alpha}_{K'}(\eta\cup\eta_0)$ and $\Omega_{C^\circ}\setminus{\boldsymbol\alpha}_L(\eta'\cup\eta_0)$ are open. Further,
$$
\Omega_{C^\circ}\setminus{\boldsymbol\alpha}_{K'}(\eta\cup\eta_0)\subset {\boldsymbol\alpha}_{K'}(\eta')
$$
and 
$$
(\Omega_{C^\circ}\setminus{\boldsymbol\alpha}_L(\eta'\cup\eta_0))\cap {\boldsymbol\alpha}_L(\eta')=\emptyset.
$$
The set
$$ \beta:= (\Omega_{C^\circ}\setminus{\boldsymbol\alpha}_{K'}(\eta\cup\eta_0)) \cap  (\Omega_{C^\circ}\setminus{\boldsymbol\alpha}_L(\eta'\cup\eta_0))$$
is open and
\begin{equation}\label{6.5} 
\beta\cap {\boldsymbol\alpha}_L(\eta')=\emptyset\quad\mbox{and}\quad \beta\subset {\boldsymbol\alpha}_{K'}(\eta').
\end{equation}

As in \cite{BLYZZ20}, one obtains that $\beta$ is not empty, hence $\lambda(\beta)>0$. From (\ref{6.1}) and (\ref{6.5}) we have
$$ {\boldsymbol\alpha}_L(\eta') = {\boldsymbol\alpha}_L(\eta'\setminus\beta)\subset {\boldsymbol\alpha}_{K'}(\eta')\setminus \beta$$
and hence
\begin{eqnarray*}
\lambda(L,\eta') =\lambda({\boldsymbol\alpha}_L(\eta')) &\le& \lambda({\boldsymbol\alpha}_{K'}(\eta')\setminus\beta)\\
&<& \lambda({\boldsymbol\alpha}_{K'}(\eta')\setminus\beta) +\lambda(\beta) = \lambda({\boldsymbol\alpha}_{K'}(\eta')) = \lambda(K,\eta'),
\end{eqnarray*}
a contradiction.
\end{proof}

\noindent Author's address:\\[2mm]
Rolf Schneider\\Mathematisches Institut, Albert--Ludwigs-Universit{\"a}t\\D-79104 Freiburg i.~Br., Germany\\E-mail: rolf.schneider@math.uni-freiburg.de

\end{document}